\numberwithin{equation}{section}
\definecolor{gr}{rgb}   {0.,   0.69,   0.23 }
\definecolor{bl}{rgb}   {0.,   0.5,   1. }
\definecolor{mg}{rgb}   {0.85,  0.,    0.85}
\definecolor{or}{rgb}   {0.9,  0.5,   0.}
\definecolor{webred}{rgb}{0.75,0,0}
\definecolor{webgreen}{rgb}{0,0.75,0}
\newtheorem{theorem}{Theorem}[section]
\newtheorem{lemma}[theorem]{Lemma}
\newtheorem{notation}[theorem]{Notation}
\newtheorem{remark}[theorem]{Remark}
\newtheorem{proposition}[theorem]{Proposition}
\newcommand{\Bl}{\color{blue}}
\newcommand{\dist}{\mathsf{dist}}
\newcommand{\x}{\mathbf{x}}
\newcommand{\n}{\mathbf{n}}
\newcommand{\Dom}{\mathsf{Dom}}
\newcommand{\supp}{\mathsf{supp}\,}
\newcommand{\h}{\hbar}
\newcommand{\R}{\mathbb{R}}
\newcommand{\C}{\mathbb{C}}
\newcommand{\eps}{\varepsilon}
\newcommand{\dx}{\,\mathrm{d}}
\begin{document}

\title[]{Weyl formulae for the Robin Laplacian \\ in the semiclassical limit}

\author{A. Kachmar}
\address[A. Kachmar]{Lebanese University, Department of Mathematics, Hadath, Lebanon.}
\email{ayman.kashmar@gmail.com}

\author{P. Keraval}
\address[P. Keraval]{IRMAR, Universit\'e de Rennes 1, Campus de Beaulieu, F-35042 Rennes cedex, France}
\email{pierig.keraval@gmail.com}

\author{N. Raymond}
\address[N. Raymond]{IRMAR, Universit\'e de Rennes 1, Campus de Beaulieu, F-35042 Rennes cedex, France}
\email{nicolas.raymond@univ-rennes1.fr}

\date{\today}

\maketitle
\begin{abstract}
This paper is devoted to establish semiclassical Weyl formulae for the Robin Laplacian on smooth domains in any dimension. Theirs proofs are reminiscent of the Born-Oppenheimer method.
\end{abstract}
\section{Introduction}

\subsection{Context and motivations}
For $d\geq 2$, let us consider an \emph{open bounded connected} subset of $\R^d$ denoted by $\Omega$ with a $\mathcal{C}^3$ connected boundary $\Gamma=\partial\Omega$ and for which the standard tubular coordinates are well defined (see Section \ref{sec.bnd}). On this domain, we consider the Robin Laplacian $\mathcal{L}_{h}$ defined as the self-adjoint operator associated with the closed quadratic form defined on $H^1(\Omega)$ by the formula
\[\forall u\in H^1(\Omega)\,,\qquad\mathcal{Q}_{h}(u)=\int_{\Omega}|h\nabla u|^2 \dx\x-h^{\frac{3}{2}}\int_{\Gamma}|u|^2\dx\Gamma\,,\]
where $\dx\Gamma$ is the surface measure of the boundary and $h>0$ is the semiclassical parameter. The domain of the operator $\mathcal{L}_{h}$ is given by
\[\Dom(\mathcal{L}_{h})=\{u\in H^2(\Omega) : \n\cdot h^{\frac{1}{2}}\nabla u=-u\mbox{ on }\Gamma\}\,,\]
where $\n$ is the inward pointing normal to the boundary.

The aim of this paper is the quantify the number of non positive eigenvalues created by the Robin condition in the semiclassical limit $h\to 0$. The estimate of the non positive spectrum of the Robin Laplacian in the semiclassical limit (or equivalently in the strong coupling limit) has given rise to many contributions (in various geometric contexts) in the last years (see \cite{LP, EMP, HK-tams, PP-eh, HKR15}). Negative eigenvalues of the operator $\mathcal L_h$ have eigenfunctions localized near the boundary of the domain thereby serving as  edge states. One of the most characteristic results is established in \cite{PP-eh} and states that the $n$-th eigenvalue of $\mathcal{L}_{h}$ is approximated, modulo $\mathcal{O}(h^2)$, by the $n$-th eigenvalue of the effective Hamiltonian acting on the boundary
\begin{equation}\label{eq.Heff}
-h+h^2\mathcal{L}^\Gamma-h^{\frac{3}{2}}\kappa\,,
\end{equation}
where $\mathcal{L}^\Gamma$ is the Laplace-Beltrami operator on $\Gamma$ and where $\kappa$ is the mean curvature. The approximation of the eigenfunctions of $\mathcal L_h$ via those of the effective Hamiltonian is obtained in \cite{HK-tams} for the two dimensional situation.

Let us emphasize here that the effective Hamiltonian in \eqref{eq.Heff} concerns \emph{individual} eigenvalues and thus that it is not direct to deduce (more than formally) an asymptotic estimate of the counting function of $\mathcal{L}_{h}$. In two dimensions, the problem of deriving a strengthened effective Hamiltonian was also tackled to investigate semiclassical tunneling in presence of symmetries in \cite{HKR15}. Moreover, in \cite[Section 7]{HKR15}, as a byproduct of the strategy developed there (which was initially inspired by \cite{MT05, DR14} or \cite{Ray}), Weyl formulae are established in two dimensions. The present paper is an extension of these results to any dimension and it proves, in an appropriate energy window, a \emph{uniform} approximation of $\mathsf{sp}\left(\mathcal{L}_{h}\right)$ by the spectrum of a slight perturbation of  \eqref{eq.Heff}.

\subsection{Results}
For $\lambda\in\R$, we denote by
\[\mathsf{N}\left(\mathcal{L}_{h},\lambda\right)={\rm Tr}\Big(\mathbf 1_{(-\infty,\lambda]}(\mathcal L_h)\Big)\,,\]
the number of eigenvalues $\mu_n(h)$ of $\mathcal L_h$ below the energy level $\lambda$. Let us now state our main two theorems that relate the counting functions of $\mathcal{L}_{h}$ and $\mathcal{L}^\Gamma$ in the semiclassical limit.
\begin{theorem}\label{theo.Weyl1}
We have the following Weyl estimate for the low lying eigenvalues:
\[\forall E \in \mathbb R\,,\,\quad \mathsf{N}\left(\mathcal{L}_{h},-h+Eh^{\frac{3}{2}}\right)\underset{h\to 0}{\sim}\mathsf{N}\left(h^{\frac{1}{2}}\mathcal{L}^\Gamma-\kappa, E\right)\,.\]

\end{theorem}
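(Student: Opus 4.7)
The plan is to use a semiclassical Born--Oppenheimer reduction, which is natural because the eigenfunctions associated with eigenvalues of order $-h$ concentrate at distance $O(h^{1/2})$ from $\Gamma$. First I would pass to tubular coordinates $(s,t)$ in a neighborhood $\{0\le t<\delta\}$ of $\Gamma$ and perform the dilation $\tau=t/h^{1/2}$; the quadratic form $\mathcal{Q}_h$ becomes, up to a weight $1-h^{1/2}\tau\kappa(s)+O(h)$, the sum of a transverse part $h|\partial_\tau u|^2-h|u(s,0)|^2$ and a tangential part of order $h^2$ involving the metric on $\Gamma$.

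The transverse model is then $H_0=-\partial_\tau^2$ on $L^2(0,\infty)$ with Robin condition $u'(0)=-u(0)$, whose spectrum consists of a single negative simple eigenvalue $-1$ with normalized eigenfunction $u_0(\tau)=\sqrt{2}\,e^{-\tau}$ together with the half-line $[0,\infty)$, hence a macroscopic gap of size $1$. Let $\Pi_0$ denote the projection on the fibered ground state $f(s)u_0(\tau)$ and $\Pi_0^\perp=\Id-\Pi_0$. Combined with an Agmon-type decay estimate in $\tau$ for eigenfunctions of $\mathcal{L}_h$ below $-h/2$, a careful expansion yields
\[
\Pi_0\mathcal{L}_h\Pi_0\ \approx\ -h+h^2\mathcal{L}^\Gamma-h^{\frac{3}{2}}\kappa\,,\qquad \Pi_0^\perp\mathcal{L}_h\Pi_0^\perp\ \ge\ -Ch\,,
\]
with $C<1$, so the high-transverse sector does not contribute to $\mathsf{N}(\mathcal{L}_h,-h+Eh^{3/2})$ once $h$ is small.

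To convert these operator comparisons into the stated Weyl asymptotic I would invoke the min-max principle in a Feshbach/Grushin spirit. For the upper bound I use the decoupling induced by $\Pi_0,\Pi_0^\perp$ together with a quadratic form inequality of the type $\mathcal{L}_h\ge -h+h^{3/2}(h^{1/2}\mathcal{L}^\Gamma-\kappa)-o(h^{3/2})$ on the low-energy subspace; for the lower bound I build an $N_\Gamma$-dimensional trial space in $H^1(\Omega)$ out of products $\chi(t)\,u_0(t/h^{1/2})\,\psi_j(s)$, where the $\psi_j$ are the first $N_\Gamma=\mathsf{N}(h^{1/2}\mathcal{L}^\Gamma-\kappa,E)$ eigenfunctions of the effective boundary operator and $\chi$ is a cutoff supported in the tubular chart. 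A direct quadratic form computation places each trial state below $-h+Eh^{3/2}$, and min-max delivers the reverse inequality. Dividing the effective comparison by $h^{3/2}$ and translating by $h$ produces the equivalence.

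The main obstacle, distinguishing this from the individual eigenvalue estimates underlying \eqref{eq.Heff}, is to control remainders \emph{uniformly} over an energy window containing a number of eigenvalues that grows with $h^{-1}$: the counting function of $h^{1/2}\mathcal{L}^\Gamma-\kappa$ below a fixed $E$ is of order $h^{-(d-1)/4}$, so the spacing between consecutive effective eigenvalues near $E$ is correspondingly small. Consequently the error terms generated by the curvature, by the Jacobian $1-h^{1/2}\tau\kappa+O(h\tau^2)$, and by the off-diagonal coupling $\Pi_0\mathcal{L}_h\Pi_0^\perp$ must be shown to be negligible compared with this shrinking spacing — an $o(h^{3/2})$ bound in the right operator norm — rather than merely compared with the fixed scale $h^2$ that suffices for a single eigenvalue. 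Establishing this uniformity, together with the Agmon localization justifying the restriction to a tube of width $h^{1/2}|\log h|$, is where the bulk of the technical work will lie.
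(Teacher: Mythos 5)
Your high-level outline — tubular coordinates, $\tau = t/h^{1/2}$ dilation, Born--Oppenheimer decoupling, min-max — is indeed the route the paper takes. However, the specific decoupling you propose has a gap that the paper explicitly works around.

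You project onto the fibered ground state $f(s)u_0(\tau)$ of the flat half-line Robin operator $H_0 = -\partial_\tau^2$. This is precisely the naive choice the paper warns against: the remark following the definition of the Feshbach projection states that using the projection on the $\sigma$-independent ground state ``would lose the uniformity in our estimates''. The paper instead introduces a \emph{corrected} Feshbach projection $\Pi_\sigma$ onto the first eigenfunction $v_{\kappa(\sigma),\hbar}$ of the curvature-dressed transverse operator $\mathcal{H}_{\kappa(\sigma),\hbar}$, i.e.\ the Robin Laplacian on $(0,T)$ in the weighted space $L^2\big((0,T),(1-\hbar^2\kappa(\sigma)\tau)\dx\tau\big)$. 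The point is a trade-off. With the $\sigma$-dependent $\Pi_\sigma$, the transverse quadratic form $q_{\kappa(\sigma),\hbar}$ decouples \emph{exactly} as $q_{\kappa(\sigma),\hbar}(\psi)=\lambda_1(\mathcal{H}_{\kappa(\sigma),\hbar})|f(\sigma)|^2 + q_{\kappa(\sigma),\hbar}(\Pi_\sigma^\perp\psi)$, so there is no transverse off-diagonal term at all; the price is a nonzero commutator $[\nabla_\sigma,\Pi_\sigma]$, but since $v_{\kappa(\sigma),\hbar}$ depends on $\sigma$ only through $B=\hbar^2\kappa(\sigma)$, the Born--Oppenheimer correction $R_\hbar(\sigma)=\|\nabla_\sigma v_{\kappa(\sigma),\hbar}\|^2$ is $\mathcal{O}(\hbar^4)$ (Lemma~\ref{lem.h12}), small enough to absorb. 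With your flat $\Pi_0$, the commutator vanishes but the weight $\widehat m = 1-\hbar^2\kappa\tau$ couples the two transverse sectors at order $\hbar^2$; a direct Cauchy--Schwarz/Peter--Paul absorption of this coupling against the spectral gap produces an $\mathcal{O}(\hbar^2)|f|^2$ term of the \emph{same} size as the curvature correction $-\hbar^2\kappa(\sigma)|f|^2$ you are trying to extract, and with an uncontrolled constant. To recover the correct $\mathcal{O}(\hbar^4)$ shift you would have to run a genuine Schur-complement or second-order perturbation argument for the transverse operator fiberwise, which is a more delicate route than the one the paper takes and is not sketched in your proposal.

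A second, smaller point concerns the ``main obstacle'' paragraph. The difficulty is \emph{not} that error terms must beat the shrinking eigenvalue spacing of $h^{1/2}\mathcal{L}^\Gamma-\kappa$; on a generic manifold those eigenvalues may cluster arbitrarily and no such comparison is attempted. What the paper proves (Theorem~\ref{theo.eff}) is a \emph{uniform two-sided sandwich} $\mu_n^-(h)\le\mu_n(h)\le\mu_n^+(h)$ for all $n$ in the relevant window, where $\mathcal{L}^{\mathsf{eff},\pm}_h=-h+(1\pm C_\pm h^{1/2})h^2\mathcal{L}^\Gamma-\kappa h^{3/2}\pm C_\pm h^2$. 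Rescaling, the counting function of $\mathcal{L}^{\mathsf{eff},\pm}_h$ below $-h+Eh^{3/2}$ is that of $(1\pm C_\pm h^{1/2})h^{1/2}\mathcal{L}^\Gamma-\kappa$ below $E\mp C_\pm h^{1/2}$, which is asymptotically equivalent to $\mathsf{N}(h^{1/2}\mathcal{L}^\Gamma-\kappa,E)$ by the stability of the Weyl law \eqref{eq.Weyl-stand'} under $o(1)$ perturbations of the symbol and the threshold. The error budget you need is therefore a multiplicative $1+\mathcal{O}(h^{1/2})$ on the tangential symbol and an additive $\mathcal{O}(h^{1/2})$ on the threshold — not an $o(h^{3/2})$ operator-norm bound matched against eigenvalue gaps.
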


\begin{theorem}\label{theo.Weyl2}
We have the following Weyl estimate for the non positive eigenvalues:
\[\mathsf{N}\left(\mathcal{L}_{h},0\right)\underset{h\to 0}{\sim}\mathsf{N}\left(h\mathcal{L}^\Gamma,1\right)\,.\]
\end{theorem}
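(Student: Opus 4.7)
The plan is to extend the Born-Oppenheimer analysis underlying Theorem \ref{theo.Weyl1} from the low-lying regime to the full non-positive energy window, by establishing a uniform effective-Hamiltonian comparison for $\mathcal{L}_h$ against
\[
\mathcal{H}_h^{\mathrm{eff}} := -h + h^2 \mathcal{L}^\Gamma - h^{\frac{3}{2}} \kappa.
\]
Concretely, I would prove two-sided quadratic form inequalities
\[
\mathcal{H}_h^{\mathrm{eff}} - R(h)\ \leq\ \mathcal{L}_h\ \leq\ \mathcal{H}_h^{\mathrm{eff}} + R(h),\qquad R(h) = o\bigl(h^{\frac{3}{2}}\bigr),
\]
holding uniformly on tangential frequencies up to $\mathcal{L}^\Gamma \lesssim h^{-1}$, equivalently on the spectral subspace $\mathbf{1}_{(-\infty,0]}(\mathcal{L}_h)$. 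The scheme is the same as for Theorem \ref{theo.Weyl1}: an Agmon-type estimate confines non-positive eigenfunctions into a tube of width $h^{\frac{1}{2}-\eta}$ around $\Gamma$; in tubular coordinates $(s,t)$ one then projects onto the transverse ground state $e^{-\tau}$ (with $\tau = t/h^{\frac{1}{2}}$) of the model Robin problem on $\R_+$, which produces the $-h$ term, while a Taylor expansion of the metric supplies $h^2\mathcal{L}^\Gamma$ and the mean curvature correction $-h^{\frac{3}{2}}\kappa$.

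Granted this comparison, the min-max principle yields
\[
\mathsf{N}\bigl(\mathcal{H}_h^{\mathrm{eff}} + R(h),\, 0\bigr)\ \leq\ \mathsf{N}(\mathcal{L}_h,0)\ \leq\ \mathsf{N}\bigl(\mathcal{H}_h^{\mathrm{eff}} - R(h),\, 0\bigr).
\]
Bounding $|\kappa|\leq \|\kappa\|_\infty$ and rescaling the energy (dividing by $h^2$ and shifting by $h^{-1}$) then gives, for some $C>0$,
\[
\mathsf{N}\bigl(h\mathcal{L}^\Gamma,\,1 - C h^{\frac{1}{2}}\bigr)\ \leq\ \mathsf{N}(\mathcal{L}_h,0)\ \leq\ \mathsf{N}\bigl(h\mathcal{L}^\Gamma,\,1 + C h^{\frac{1}{2}}\bigr).
\]
Since $\Gamma$ is a compact smooth $(d-1)$-manifold, the classical Weyl law for the Laplace--Beltrami operator reads $\mathsf{N}(\mathcal{L}^\Gamma,\mu)\sim c_\Gamma\, \mu^{(d-1)/2}$; applied at $\mu = h^{-1}\bigl(1\pm C h^{\frac{1}{2}}\bigr)$ it gives $\mathsf{N}(h\mathcal{L}^\Gamma,1\pm C h^{\frac{1}{2}}) \sim c_\Gamma h^{-(d-1)/2} \sim \mathsf{N}(h\mathcal{L}^\Gamma,1)$, and the conclusion follows by squeeze.

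The main obstacle is the word \emph{uniformly} in the comparison step. In the regime $h\mathcal{L}^\Gamma\sim 1$ the tangential kinetic energy is of the same order $h$ as the transverse energy, so the Born-Oppenheimer decoupling is marginal: the geometric cross terms produced by the Taylor expansion of the metric in the normal coordinate are not automatically small when tested against eigenfunctions of very high tangential frequency, because those frequencies can reach the scale $h^{-\frac{1}{2}}$ dual to the transverse length $h^{\frac{1}{2}}$. Establishing $R(h) = o(h^{\frac{3}{2}})$ uniformly on $\mathbf{1}_{(-\infty,0]}(\mathcal{L}_h)$ therefore requires careful commutator estimates between $\mathcal{L}^\Gamma$ and the transverse spectral projector, exploiting the exponential concentration of $e^{-\tau}$ to absorb the dependence on the tangential frequency. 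The remaining ingredients---Agmon localization near $\Gamma$, the Weyl law on $\Gamma$, and the squeezing argument---are classical.
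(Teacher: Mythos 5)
The route you propose is genuinely different from the paper's, and the paper actually tells you so: in a remark after the statement the authors emphasize that Theorem~\ref{theo.Weyl2} \emph{does not} follow from the Born--Oppenheimer reduction but from a variational argument in the spirit of Colin de Verdi\`ere, based on a rough partition of unity and separation of variables. Their proof cuts $\overline\Omega$ into a collar $\{\dist(\x,\Gamma)<h^\rho\}$ (with $\rho<\tfrac12$ to be taken small) and an interior piece. On the collar they write the form in tubular coordinates, replace the metric by the flat tensor metric at a cost $O(h^\rho)$, and use the Hilbertian decomposition by transverse Robin--Dirichlet modes: the first transverse eigenvalue is $-h+O(h^\infty)$ while the second, thanks to the Dirichlet wall at $t=h^\rho$, is of size $h^{2\rho}\gg h^{2-2\rho}$, so it does not contribute below the threshold. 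That reduces the collar count to $\mathsf N(h\mathcal L^\Gamma,1+O(h^{1-2\rho}))$. The interior piece is handled by the standard Weyl law for the Dirichlet Laplacian, giving $O(h^{-d\rho})=o(h^{-(d-1)/2})$ for $\rho$ small. The lower bound is a direct test with products $f_{h,n}(s)u_h(t)$.

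The gap in your plan is concrete and is precisely the point the paper is careful to avoid. Theorem~\ref{theo.eff} is only established on $\mathcal N_{\eps_0,h}$, i.e.\ for eigenvalues $\le -\eps_0h$ with $\eps_0>0$ fixed, because the lower bound in Proposition~\ref{lower_bound} only controls the orthogonal component via $\lambda_2(\mathcal H_{\kappa(\sigma),\hbar})$ together with negative corrections $-C(\hbar^6+\hbar^2R_\hbar(\sigma))\|\Pi_\sigma^\perp\psi\|^2$. Since the tubular depth there is $T=\hbar^{-1}$, one only has $\lambda_2(\mathcal H_{\kappa(\sigma),\hbar})\ge -C\hbar$ (the weight perturbation costs $O(|B|T)=O(\hbar)$), so the best uniform lower bound on the orthogonal sector is $-C\hbar$, which is why the comparison is pushed against a tensor operator with a spurious eigenvalue $-\eps_0/2$ and discarded above $-\eps_0$. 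To count up to $0$ you must control energies in $[-\eps_0h,0]$ as $\eps_0\downarrow 0$, and there the orthogonal modes re-enter the game: your "careful commutator estimates" are exactly what is missing, and there is no indication in your sketch of how to beat the $-C\hbar$ floor without shrinking the tube (which is what the paper does, at the cost of introducing the Dirichlet Laplacian in the interior and hence requiring $\Omega$ bounded). A further, secondary, inaccuracy is the claim $R(h)=o(h^{3/2})$: the metric corrections produce a relative error $O(h^{1/2})$ on $h^2\mathcal L^\Gamma$, hence an error of size $O(h^{3/2})$ (not $o$) when $h\mathcal L^\Gamma\sim1$. This does not spoil the squeeze $\mathsf N(h\mathcal L^\Gamma,1\pm Ch^{1/2})\sim\mathsf N(h\mathcal L^\Gamma,1)$, but it shows the rest of the arithmetic was not carried through carefully. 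In short, your final squeeze and the use of the Weyl law on $\Gamma$ are fine, but the two-sided comparison you posit as the starting point is not established, is not what the paper proves, and the obstruction you yourself identify is the real one; the paper's partition-of-unity and separation-of-variables argument is designed precisely to sidestep it.
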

\begin{remark}
\rm Note that we have the classical Weyl estimates (see for instance \cite[Theorem 14.11]{Z13}):
\begin{equation}\label{eq.Weyl-stand'}
\mathsf{N}\left(h^{\frac{1}{2}}\mathcal{L}^\Gamma-\kappa, E\right)\underset{h\to 0}{\sim}\frac{1}{\left(2\pi h^{\frac{1}{4}}\right)^{d-1}}\mathsf{Vol}_{T^*\Gamma}\{(s,\sigma)\, :\, |\sigma|^2_{g}-\kappa(s)\leq E\} \,,
\end{equation}
\begin{equation}\label{eq.Weyl-stand}
\mathsf{N}\left(h\mathcal{L}^\Gamma,1\right)\underset{h\to 0}{\sim}\frac{1}{\left(2\pi h^{\frac{1}{2}}\right)^{d-1}}\mathsf{Vol}_{T^*\Gamma}\{(s,\sigma)\, :\, |\sigma|^2_{g}\leq 1\} \,.
\end{equation}
Note that they remain true if $E$ and $1$ are replaced by $E+o(1)$ and $1+o(1)$ respectively.
\end{remark}

\begin{remark}
\rm Let us notice here that these results are proved in the case of a $\mathcal{C}^3$ bounded and connected boundary. The connectedness is actually not necessary but avoids to consider each connected component separately. For Theorem~\ref{theo.Weyl1}, the boundedness of $\Gamma$ is not necessary either (bounds on the curvature are enough), but allows a lighter presentation. We refer to \cite{PP-eh} where such geometric assumptions are accurately described. 
\end{remark}

\begin{remark}
\rm The proof we give to Theorem~\ref{theo.Weyl2} uses the classical Weyl law in the interior of the domain $\Omega$. This law requires that the domain $\Omega$ is bounded.
\end{remark}

\subsection{Strategy of the proofs}
In Section \ref{sec.red-bnd}, we show that the interior of $\Omega$ does not contribute to the creation of non positive spectrum (the Laplacian is non negative inside $\Omega$). We quantify this thanks to classical Agmon estimates and reduce the investigation to a Robin Laplacian on a thin neighborhood of the boundary (see Proposition \ref{prop:red-bnd}). In Section \ref{sec.eff}, by using an idea from the Born-Oppenheimer context, we derive \emph{uniform} effective Hamiltonians (see Theorem \ref{theo.eff}) whose eigenvalues simultaneously describe the eigenvalues of $\mathcal{L}_{h}$ less than $-\eps_{0}h$ (for $\eps_{0}>0$ as small as we want). In particular, we show that the effectiveness of the reduction to a boundary operator is determined by the estimate of the \emph{Born-Oppenheimer correction}. This correction is an explicit quantity related to dimension one. It appears in physics and, for instance, in the contributions \cite{MS02, PST03, T03, PST07, WT14} in the context of time evolution (see also the review \cite[Section 4]{J14}). Let also mention here \cite{B85} dealing with the semiclassical counting function in the Born-Oppenheimer approximation (in a pseudo-differential context). Our strategy gives rise to a rather short proof (which does not even require approximations of the eigenfunctions) and displays a uniformity in the spectral estimates that implies the Weyl formula of Theorem \ref{theo.Weyl1}. In Section \ref{sec.neg}, we establish Theorem \ref{theo.Weyl2}. Note that the proof of Theorem~\ref{theo.Weyl2} does not follow from a reduction to the effective Hamiltonian but uses a variational argument as the one in \cite{CdV}. This argument is based on a decomposition of the operator via a rough partition of the unity and a separation of variables.

\section{The Robin Laplacian near the boundary}\label{sec.red-bnd}
\subsection{Reduction near the boundary via Agmon estimates}
The eigenfunctions (with negative eigenvalues) of the initial operator $\mathcal{L}_h$ are localized near the boundary since the Laplacian is non negative inside the domain. This localization is quantified by the following proposition (the proof of which is a direct adaptation of the case in dimension two, see \cite{HK-tams} and also \cite{H88}).
\begin{proposition}\label{thm:dec}
Let $\epsilon_{0}\in(0,1)$ and $\alpha\in(0,\sqrt{\epsilon_{0}})$. There exist constants $C>0$  and $h_0\in(0,1)$ such
that, for  $h\in(0,h_0)$,  if $u_h$ is a normalized eigenfunction of $\mathcal L_h$ with eigenvalue $\mu\leq-\epsilon_{0}h$, then,
\[\int_\Omega \left(|u_h(\x)|^2+h|\nabla u_h(\x)|^2\right)\exp\left(\frac{2\alpha\, {\rm dist}(\x,\Gamma)}{h^{\frac 1 2}}\right)\dx \x\leq C\,.\]
\end{proposition}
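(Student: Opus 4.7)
The plan is to implement a classical Agmon/Persson-type weighted estimate adapted to the Robin boundary condition, following the dimension-two argument of \cite{HK-tams}. Set $\Phi(\x)=\alpha\,\mathsf{dist}(\x,\Gamma)/h^{1/2}$, which is Lipschitz with $|\nabla\Phi|\leq\alpha/h^{1/2}$ almost everywhere. To justify manipulating $e^{\Phi}u_h$, I would first truncate $\Phi$ by a large parameter $N$ and let $N\to\infty$ at the end; each intermediate computation only uses that $e^{\Phi_N}u_h\in H^1(\Omega)$.

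The first main step is the Agmon identity. Testing the eigenvalue equation $\mathcal{L}_h u_h=\mu u_h$ against $e^{2\Phi}u_h$ and integrating by parts (the Robin boundary term cancelling with the $h^{3/2}$ trace in $\mathcal{Q}_h$), I obtain
\[
\mathcal{Q}_h(e^{\Phi}u_h)=\mu\,\|e^{\Phi}u_h\|^2+\int_{\Omega}h^2|\nabla\Phi|^2 e^{2\Phi}|u_h|^2\dx\x.
\]
Using $h^2|\nabla\Phi|^2\leq\alpha^2 h$ and $\mu\leq-\eps_0 h$, this gives the \emph{upper bound}
\[
\mathcal{Q}_h(e^{\Phi}u_h)\leq-(\eps_0-\alpha^2)h\,\|e^{\Phi}u_h\|^2.
\]

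The second step is a matching lower bound, obtained by localizing at a scale $\delta=ch^{1/2}$. Choose a quadratic partition of unity $\chi_1^2+\chi_2^2=1$ with $\chi_1=1$ on $\{\mathsf{dist}(\cdot,\Gamma)<\delta\}$, $\chi_1=0$ outside $\{\mathsf{dist}(\cdot,\Gamma)<2\delta\}$, and $|\nabla\chi_j|\leq C/\delta$. The IMS formula yields
\[
\mathcal{Q}_h(v)=\mathcal{Q}_h(\chi_1 v)+\mathcal{Q}_h(\chi_2 v)-\sum_{j=1,2}h^2\big\||\nabla\chi_j|v\big\|^2,
\]
applied to $v=e^{\Phi}u_h$; the IMS error is $O(h^2/\delta^2)\|v\|^2=O(1)\cdot h\|v\|^2$. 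Since $\chi_2 v$ vanishes near $\Gamma$, $\mathcal{Q}_h(\chi_2 v)=\|h\nabla(\chi_2 v)\|^2\geq0$. For $\chi_1 v$, I would pass to tubular coordinates $(s,t)$, discard the non-negative tangential contribution, and apply fiberwise the one-dimensional inequality
\[
\int_0^{2\delta}h^2|\partial_t w|^2\dx t-h^{3/2}|w(0)|^2\geq -h\int_0^{2\delta}|w|^2\dx t,
\]
which expresses that the half-line Robin operator $-h^2\partial_t^2$ with $h^{1/2}w'(0)=-w(0)$ has ground state $-h$. The Jacobian correction in the tubular change of variables produces a multiplicative $1+O(\delta)$, so
\[
\mathcal{Q}_h(\chi_1 v)\geq-h(1+O(h^{1/2}))\|\chi_1 v\|^2.
\]
Combining the two sides of the inequality, and taking $c$ large enough so that the IMS error is absorbed into $(\eps_0-\alpha^2)h\|v\|^2$, I get
\[
\|e^{\Phi}u_h\|^2\leq C\|\chi_1 e^{\Phi}u_h\|^2\leq C\,e^{4\alpha c}\|\chi_1 u_h\|^2\leq C',
\]
because $\Phi\leq 2\alpha c$ on $\supp\chi_1$ and $u_h$ is normalized.

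The third step upgrades the $L^2$ bound to the $H^1$ bound. From the Agmon identity above, $\int|h\nabla(e^{\Phi}u_h)|^2\dx\x=\mathcal{Q}_h(e^{\Phi}u_h)+h^{3/2}\int_\Gamma e^{2\Phi}|u_h|^2\dx\Gamma$, and since $\Phi=0$ on $\Gamma$ the boundary term is just $h^{3/2}\|u_h\|_{L^2(\Gamma)}^2$, which is $O(h)$ by a standard trace inequality together with the \emph{a priori} bound $\|h\nabla u_h\|^2=O(h)$ extracted from $\mathcal{Q}_h(u_h)=\mu$. Hence $\|h\nabla(e^{\Phi}u_h)\|^2=O(h)$, and the inequality $h^2|\nabla u_h|^2 e^{2\Phi}\leq 2h^2|\nabla(e^{\Phi}u_h)|^2+2h^2|\nabla\Phi|^2 e^{2\Phi}|u_h|^2$ yields $h\int e^{2\Phi}|\nabla u_h|^2\dx\x=O(1)$, completing the estimate.

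The main technical point is the lower bound in step two: one must carefully set up the tubular coordinates (using the $\mathcal{C}^3$ regularity of $\Gamma$), justify dropping the tangential term fiberwise, and quantify the Jacobian correction, in order for the IMS remainder and the boundary $-h$ to combine into a usable inequality at scale $\delta\sim h^{1/2}$. Once this is in place, the rest is algebraic manipulation and is dimension-independent.
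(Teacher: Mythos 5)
Your proposal is correct and implements precisely the standard Agmon-estimate argument that the paper invokes without spelling it out (the text simply states the proof is ``a direct adaptation of the case in dimension two, see \cite{HK-tams} and also \cite{H88}''). The three-step structure you give --- (i) the weighted Agmon identity yielding $\mathcal{Q}_h(e^{\Phi}u_h)\leq -(\eps_0-\alpha^2)h\|e^{\Phi}u_h\|^2$, (ii) a lower bound obtained by an IMS partition at scale $\delta=ch^{1/2}$, reduction to the one-dimensional transverse Robin operator whose ground-state energy is $-h$, and a Jacobian correction of size $1+O(h^{1/2})$, then absorbing the IMS remainder $O(h/c^2)\|v\|^2$ by taking $c$ large, and (iii) upgrading to the weighted $H^1$ bound using $\Phi|_\Gamma=0$, the trace inequality, and the a priori bound $\|h\nabla u_h\|^2=O(h)$ --- is exactly the classical mechanism behind Proposition~\ref{thm:dec}, now stated in arbitrary dimension. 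The only point worth flagging is that in step (iii) the a priori estimate $\|h\nabla u_h\|^2=O(h)$ is itself obtained from $\mathcal{Q}_h(u_h)=\mu$ \emph{together with} the trace inequality and Young's inequality, so the two ingredients are used jointly rather than in the sequence your phrasing suggests; this is a matter of presentation, not of substance.
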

Given $\delta \in (0,\delta_0)$ (with $\delta_0>0$ small enough), we introduce the $\delta$-neighborhood of the boundary
\begin{equation}\label{eq.delta.neighbor}
\mathcal V_{\delta}=\{\x\in\Omega~:~{\rm dist}(\x,\Gamma)<\delta\}\,,
\end{equation}
and the quadratic form, defined on the variational space
\[V_{\delta}=\{u\in H^1(\mathcal V_{\delta})~:~u(\x)=0\,,\quad\mbox{ for all } \x\in\Omega \mbox{ such that } {\rm dist}(\x,\Gamma)=\delta\}\,,\]
by the formula
\[\forall u\in V_{\delta}\,,\qquad\mathcal{Q}_{h}^{\{\delta\}}(u)=\int_{\mathcal{V}_{\delta}}|h\nabla u|^2 \dx\x-h^{\frac{3}{2}}\int_{\Gamma}|u|^2\dx \Gamma\,.\]
Let us denote by $\mu^{\{\delta\}}_n(h)$ the $n$-th eigenvalue of the corresponding operator $\mathcal{L}_{h}^{\{\delta\}}$. It is then standard to deduce from the min-max principle and the Agmon estimates  of Proposition~\ref{thm:dec} the following proposition (see \cite{H88}).
\begin{proposition}\label{prop:red-bnd}
Let $\epsilon_{0}\in(0,1)$ and $\alpha\in(0,\sqrt{\epsilon_{0}})$.There exist constants $C>0$, $h_0\in(0,1)$ such that, for all $h\in(0,h_0)$, $\delta\in(0,\delta_{0})$, $n\geq 1$ such that $\mu_{n}(h)\leq-\epsilon_{0}h$,
\begin{equation}\label{complem}
\mu_n(h)\leq \mu^{\{\delta\}}_n(h)\leq \mu_n(h)+C\exp\left(-\alpha  \delta h^{-\frac 12} \right)\,.
\end{equation}
\end{proposition}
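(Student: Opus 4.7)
The lower bound $\mu_n(h)\leq \mu_n^{\{\delta\}}(h)$ is immediate from the min-max principle: any $u\in V_\delta$ extends by zero to an element $\tilde u\in H^1(\Omega)$ with $\mathcal Q_h(\tilde u)=\mathcal Q_h^{\{\delta\}}(u)$ and $\|\tilde u\|=\|u\|$, the vanishing of $u$ at $\mathrm{dist}(\cdot,\Gamma)=\delta$ being exactly what is needed to preserve global $H^1$-regularity. Extending $n$-dimensional competitor subspaces in this way, the min-max values for $\mathcal L_h$ are controlled by those for $\mathcal L_h^{\{\delta\}}$.

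For the upper bound I would build test functions for $\mathcal L_h^{\{\delta\}}$ out of the first $n$ eigenfunctions $u_1,\ldots,u_n$ of $\mathcal L_h$ (whose eigenvalues $\mu_j\leq -\epsilon_0 h$) by truncation near the inner edge of the collar $\mathcal V_\delta$. The plan is to fix $\alpha'\in(\alpha,\sqrt{\epsilon_0})$ and a small $\eta>0$, choose a smooth cutoff $\chi$ equal to $1$ on $\mathcal V_{(1-\eta)\delta}$ and supported in $\overline{\mathcal V_\delta}$, with $|\nabla\chi|\lesssim(\eta\delta)^{-1}$ supported in the thin shell $\mathcal V_\delta\setminus\mathcal V_{(1-\eta)\delta}$, and to use the IMS-type identity
\[
\mathcal Q_h(\chi v)=\mathsf{Re}\,\langle\mathcal L_h v,\chi^2 v\rangle+h^2\bigl\||\nabla\chi|\,v\bigr\|^2,
\]
valid for $v\in\mathsf{Dom}(\mathcal L_h)$ (the boundary contribution from integration by parts at $\Gamma$ is killed by $\partial_\nu\chi\equiv 0$ near $\Gamma$ together with the Robin condition satisfied by $v$). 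Since $\chi v$ is supported in $\mathcal V_\delta$ and $\chi\equiv 1$ near $\Gamma$, the left-hand side coincides with $\mathcal Q_h^{\{\delta\}}(\chi v)$. Applied to $v=\sum_j c_j u_j$, this identity produces the main term $\sum_j\mu_j|c_j|^2\leq \mu_n(h)\sum_j|c_j|^2$ plus remainders, and Proposition \ref{thm:dec} with exponent $\alpha'$ handles those remainders: it controls $h^2\||\nabla\chi|v\|^2$ over $\mathsf{supp}\,\nabla\chi$, the cross contributions $\int(1-\chi^2)u_j\bar u_k$ hidden in $\langle\mathcal L_h v,\chi^2 v\rangle$, and the near-orthonormality defect $\|\chi u_j\|^2=1+O(\e^{-\alpha'(1-\eta)\delta h^{-1/2}})$, the latter guaranteeing that $F=\mathrm{span}(\chi u_1,\ldots,\chi u_n)$ is genuinely $n$-dimensional inside $V_\delta$.

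The only delicate point is the bookkeeping of exponential constants. The remainders come with polynomial prefactors in $h^{-1}$, the dominant one being $n\lesssim h^{-(d-1)/2}$ from the number of eigenfunctions (further contributions $(\eta\delta)^{-2}$ from $|\nabla\chi|^2$ and $|\mu_j|\lesssim h$ from the operator are milder), and these must be absorbed by the Agmon exponential. This forces one to apply Proposition \ref{thm:dec} with an exponent $\alpha'$ strictly larger than the target $\alpha$ and to shrink $1-\chi$ by a factor $\eta>0$ chosen so that $\alpha'(1-\eta)>\alpha$; the polynomial prefactors are then defeated by the residual exponential $\e^{(\alpha-\alpha'(1-\eta))\delta h^{-1/2}}\to 0$. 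Since any $\alpha<\sqrt{\epsilon_0}$ allows such $\alpha'$ and $\eta$ (at the cost of enlarging $C$), the Rayleigh quotient of any element of $F$ is bounded by $\mu_n(h)+C\e^{-\alpha\delta h^{-1/2}}$, and the min-max principle closes the argument.
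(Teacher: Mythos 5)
Your proof is correct and implements exactly the standard argument that the paper invokes without spelling out (the paper merely says the proposition is ``standard to deduce from the min-max principle and the Agmon estimates'' and cites \cite{H88}): extension by zero for the lower bound, and truncated eigenfunctions combined with the IMS identity and Proposition~\ref{thm:dec} for the upper bound. The one technical subtlety, which you correctly identify and handle, is the absorption of the polynomial prefactors (chiefly the number $n$ of eigenfunctions and the $(\eta\delta)^{-2}$ from the cutoff gradient) into the Agmon exponential by working with $\alpha'\in(\alpha,\sqrt{\epsilon_0})$ and a thin shell of relative width $\eta$; as with the statement itself, this tacitly requires $\delta h^{-1/2}\to\infty$, which holds in the regime the paper actually uses ($\delta=h^{1/4}$).
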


\subsection{Description of the boundary coordinates}\label{sec.bnd}
Let $\iota$ denote the embedding of $\Gamma$ in $\R^d$ and $g$ the induced metrics on $\Gamma$. $(\Gamma,g)$ is a $\mathcal{C}^3$ Riemmanian manifold, which we orientate according to the ambient space. Let us introduce the map $\Phi:\Gamma\times(0,\delta)\to\mathcal{V}_{\delta}$ defined by the formula
\begin{equation*}
\Phi(s,t)=\iota(s)+t\n(s)\,,
\end{equation*}
which we assume to be injective. The transformation $\Phi$ is a $\mathcal{C}^3$ diffeomorphism for $\delta\in(0,\delta_0)$ and $\delta_0$ is sufficiently small. The induced metrics on $\Gamma\times(0,\delta)$ is given by
\[G=g\circ (\mathsf{Id}-tL(s))^{2}+\dx t^2\,,\]
where $L(s)=-d\n_{s}$ is the second fondamental form of the boundary at $s$.

\subsection{The Robin Laplacian in boundary coordinates}\label{sec.Lbnd}
For all $u\in L^{2}(\mathcal V_{\delta_{0}})$, we define the pull-back function
\begin{equation}\label{eq:trans-st}
\widetilde u(s,t):= u(\Phi(s,t)).
\end{equation}
For all $u\in H^{1}(\mathcal{V}_{\delta_{0}})$, we have
\begin{equation}\label{eq:bc;n}
\int_{\mathcal{V}_{\delta_{0}}}|u|^{2}\dx\x=\int_{\Gamma\times(0,\delta_0)}|\widetilde u(s,t)|^{2}\,\tilde a\dx \Gamma \dx t\,,
\end{equation}
\begin{equation}\label{eq:bc;qf}
\int_{\mathcal{V}_{\delta_{0}}}|\nabla u|^{2}\dx\x= \int_{\Gamma\times(0,\delta_0)} \Big[\langle\nabla_{s} \widetilde u,\tilde g^{-1}\nabla_{s} \widetilde u\rangle +|\partial_{t}\widetilde u|^{2}\Big]\,\tilde a\dx \Gamma\dx t\,.
\end{equation}
where
\[\tilde g=\big(\mathsf{Id}-tL(s)\big)^2\,,\]
and $\tilde a(s,t)= |\tilde g(s,t)|^{\frac{1}{2}}$. Here $\langle\cdot,\cdot\rangle$ is the Euclidean scalar product and $\nabla_{s}$ is the differential on $\Gamma$ seen through the metrics $g$.

The operator $\mathcal L^{\{\delta\}}_h$ is expressed in $(s,t)$ coordinates as
\[\mathcal L^{\{\delta\}}_h=-h^2\tilde a^{-1}\nabla_s(\tilde a \tilde g^{-1}\nabla_s)-h^2\tilde a^{-1}\partial_t(\tilde a\partial_t)\,,\]
acting on $L^2(\tilde a\dx \Gamma\dx t)$. In these coordinates, the Robin condition becomes
\[h^2\partial_tu=-h^{\frac 32}u\quad{\rm on}\quad t=0\,.\]
We introduce, for $\delta \in (0,\delta_0)$,
\begin{equation}\label{eq:red-bnd}
\begin{aligned}
&\widetilde{\mathcal V}_\delta=\{(s,t)~:~s\in \Gamma~{\rm and}~0<t< \delta\}\,, \\ 
& \widetilde V_\delta =\{u\in H^1(\widetilde{\mathcal V_\delta})~:~u(s,\delta)=0\}\,,\\
&\widetilde{\mathcal D}_\delta=\{u\in H^2(\widetilde{\mathcal V_{\delta}})\cap \widetilde V_\delta ~:~\partial_tu(s,0)=-h^{-\frac{1}{2}}u(s,0)\}\,,\\
&\widetilde{\mathcal{Q}}_{h}^{\{\delta\}}(u)=\int_{\widetilde{\mathcal V_\delta}}\Big(h^2\langle\nabla_{s} u,\tilde g^{-1}\nabla_{s} u\rangle+|h\partial_tu|^2\Big)\tilde a\dx \Gamma\dx t-h^{\frac 32}\int_{\Gamma} |u(s,0)|^2 \dx \Gamma\,,\\
&\widetilde{\mathcal L}_h^{\{\delta\}}= -h^2\tilde a^{-1}\nabla_s(\tilde a \tilde g^{-1}\nabla_s)-h^2\tilde a^{-1}\partial_t(\tilde a\partial_t)\,.
\end{aligned}
\end{equation}
We now take \begin{equation}
\delta=  h^{\rho}\,,
\end{equation}
and write simply $\widetilde{\mathcal L}_h$ for $\widetilde{\mathcal L}_h^{\{\delta\}}$.
The operator $\widetilde{\mathcal L}_h$ with domain $\widetilde{\mathcal D}$ is the self-adjoint operator defined via the closed quadratic form $\widetilde{\mathcal V}_{\rho}\ni u\mapsto \widetilde{\mathcal{Q}}_{h}(u)$ by Friedrich's theorem.
\subsection{The rescaled operator}
We introduce the rescaling
\[(\sigma,\tau)=(s,h^{-\frac 12}t)\,,\]
the new semiclassical parameter $\hbar=h^{\frac 1 4}$ and the new weights
\begin{equation}\label{eq:Jac-a'}
\widehat a(\sigma,\tau)=\tilde a(\sigma,h^{\frac{1}{2}}\tau)\,,\qquad \widehat g(\sigma,\tau)=\tilde g(\sigma,h^{\frac{1}{2}}\tau)\,.
\end{equation}
We consider rather the operator
\begin{equation}\label{eq.Lh-hat}
\widehat{\mathcal{L}}_{\hbar}=h^{-1}\widetilde{\mathcal{L}}_{h}\,,
\end{equation}
acting on $L^2(\widehat a\dx\Gamma \dx\tau)$ and expressed in the coordinates $(\sigma,\tau)$. As in \eqref{eq:red-bnd}, we let
\begin{equation}\label{eq:dom-Lh-hat}
\begin{aligned}
&\widehat{\mathcal V}_{T }=\{(\sigma,\tau)~:~\sigma\in \Gamma~{\rm and}~0<\tau<
 T  \}\,, \\
&\widehat V_{T}=\{u\in H^1(\widehat{\mathcal V}_{T })~:~u(\sigma, T ) =0\}\,,\\
&\widehat{\mathcal D}_{T }=\{u\in H^2(\widehat{\mathcal V}_{T })\cap \widehat V_{T }~:~\partial_\tau u(\sigma,0)=-u(\sigma,0)\}\,,\\
&\widehat{\mathcal{Q}}_{\hbar}^{T }(u)=\int_{\widehat{\mathcal V}_{T }}\Big(\hbar^4\langle\nabla_{\sigma} u,\widehat g^{-1}\nabla_{\sigma} u\rangle+|\partial_\tau u|^2\Big)\widehat a \dx\Gamma \dx\tau-\int_\Gamma |u(\sigma,0)|^2\dx\Gamma\,,\\
&\widehat{\mathcal{L}}_\hbar^{T }=-\hbar^4 \widehat a^{-1}\nabla_\sigma(\widehat a \widehat g^{-1}\nabla_\sigma)-\widehat a^{-1}\partial_\tau\widehat a\partial_\tau\,.
\end{aligned}
\end{equation}
\begin{notation}
In what follows, we let $T=\hbar^{-1}$ (or equivalently $\rho=\frac{1}{4}$) and write $\widehat{\mathcal{Q}}_{\hbar}$ for $\widehat{\mathcal{Q}}_{\hbar}^{T }$.
\end{notation}

\section{A variational Born-Oppenheimer reduction}\label{sec.eff}
The aim of this section is to prove the following result that implies Theorem \ref{theo.Weyl1}.
\begin{theorem}\label{theo.eff}
For $\eps_{0}\in(0,1)$, $h>0$, we let\[\mathcal{N}_{\epsilon_{0}, h}=\{n\in\mathbb{N}^* : \mu_{n}(h)\leq-\eps_{0}h\}\,.\]
There exist positive constants $h_{0}, C_{+}, C_{-}$ such that, for all $h\in(0,h_{0})$ and $n\in\mathcal{N}_{\eps_{0}, h}$,
 \begin{equation}\label{eq:App.eh}
\mu^-_{n}(h)\leq\mu_{n}(h)\leq \mu^{+}_{n}(h)\,,\end{equation}
where $\mu^{\pm}_{n}(h)$ is the $n$-th eigenvalue of $\mathcal{L}^{\mathsf{eff}, \pm}_{h}$ defined by
 \[\mathcal{L}^{\mathsf{eff},+}_{h}=-h+(1+C_{+}h^{\frac{1}{2}})h^2\mathcal{L}^{\Gamma}-\kappa h^{\frac{3}{2}}+C_{+}h^2\,,\]
and
 \[\mathcal{L}^{\mathsf{eff},-}_{h}=-h+(1-C_{-}h^{\frac{1}{2}})h^2 \mathcal{L}^{\Gamma}-\kappa h^{\frac{3}{2}}-C_{-}h^{2}\,.\]
\end{theorem}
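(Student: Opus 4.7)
The plan is to perform a Born--Oppenheimer type reduction of the rescaled operator $\widehat{\mathcal L}_\hbar$ by freezing the tangential variable $\sigma$ and projecting onto the ground state of the transverse fiber operator. For each $\sigma\in\Gamma$, introduce on $L^2((0,\hbar^{-1}),\widehat a(\sigma,\cdot)\dx\tau)$ the one-dimensional operator
\[\mathfrak h_\hbar(\sigma)\,u = -\widehat a(\sigma,\cdot)^{-1}\partial_\tau\bigl(\widehat a(\sigma,\cdot)\partial_\tau u\bigr)\]
with Robin condition $\partial_\tau u(0)=-u(0)$ and Dirichlet condition $u(\hbar^{-1})=0$. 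The unitary conjugation $u\mapsto\widehat a^{1/2}u$ transforms it into a Schr\"odinger operator $-\partial_\tau^2 + W_\hbar(\sigma,\tau)$ with $W_\hbar = O(\hbar^4)$, together with a perturbed Robin parameter $-(1+\hbar^2\kappa(\sigma)/2+O(\hbar^4))$ at $\tau=0$. Treated as a perturbation of the half-line model $-\partial_\tau^2$ on $(0,\infty)$ (whose ground state is $\sqrt 2\,e^{-\tau}$ with eigenvalue $-1$), it has a simple lowest eigenvalue
\[\lambda_0(\sigma,\hbar) = -1-\hbar^2\kappa(\sigma)+\hbar^4 r(\sigma,\hbar),\qquad \|r\|_\infty\leq C,\]
a uniform spectral gap $\lambda_1(\sigma,\hbar)-\lambda_0(\sigma,\hbar)\geq c_0>0$, and a normalized ground state $u_0(\sigma,\cdot,\hbar)$ that decays like $e^{-\tau}$ and is $\mathcal C^2$ in $\sigma$ with derivatives bounded uniformly in $\hbar$.

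Let $\Pi_\sigma$ denote the fiberwise orthogonal projection onto $u_0(\sigma,\cdot,\hbar)$. Any $\psi\in\widehat V_T$ decomposes uniquely as $\psi = f\,u_0 + \psi_\perp$ with $f(\sigma) = \langle\psi(\sigma,\cdot),u_0(\sigma,\cdot,\hbar)\rangle$ and $\Pi_\sigma\psi_\perp(\sigma,\cdot)=0$. Substituting into $\widehat{\mathcal Q}_\hbar(\psi)$, integrating out $\tau$ using the exponential decay of $u_0$, exploiting the Taylor expansions $\widehat a = 1-\hbar^2\tau\kappa(\sigma)+O(\hbar^4\tau^2)$ and $\widehat g^{-1} = g^{-1}+O(\hbar^2\tau)$, and using the identity $\int u_0(\partial_\sigma u_0)\widehat a\dx\tau = -\tfrac12\int|u_0|^2\partial_\sigma\widehat a\dx\tau = O(\hbar^2)$ obtained by differentiating the fiber normalization, produces the key identity
\[\widehat{\mathcal Q}_\hbar(\psi) = \int_\Gamma\lambda_0(\sigma,\hbar)|f|^2\dx\Gamma + (1\pm C\hbar^2)\hbar^4\int_\Gamma|\nabla_\sigma f|^2_g\dx\Gamma \pm C\hbar^4\|f\|^2 + \widehat{\mathcal Q}_\hbar(\psi_\perp) + 2\Re\,\mathcal C(f,\psi_\perp),\]
where the cross term $\mathcal C(f,\psi_\perp)$ is controlled by Cauchy--Schwarz using the uniform smoothness and decay of $u_0$.

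For the upper bound, take $v_n$ a normalized $n$-th eigenfunction of $\mathcal L^{\mathsf{eff},+}_h$ and test the full quadratic form with $\psi_n := v_n\otimes u_0$ (so that $\psi_\perp=0$); after restoring $\hbar = h^{1/4}$, the identity yields $\widehat{\mathcal Q}_\hbar(\psi_n)\leq h^{-1}\mu_n^+(h)\|\psi_n\|^2$ provided $C_+$ is large enough, and the min-max principle gives $\mu_n(h)\leq\mu_n^+(h)$. For the lower bound, the uniform spectral gap provides $\widehat{\mathcal Q}_\hbar(\psi_\perp)\geq(\lambda_0+c_0-o(1))\|\psi_\perp\|^2$; this is used to absorb the cross term via Cauchy--Schwarz, giving the reverse estimate $\widehat{\mathcal Q}_\hbar(\psi)\geq h^{-1}\langle\mathcal L^{\mathsf{eff},-}_h f, f\rangle_{L^2(\Gamma)} + \tfrac{c_0}{2}\|\psi_\perp\|^2$. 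Applying min-max on the effective boundary space then yields $\mu_n(h)\geq\mu_n^-(h)$ for all $n\in\mathcal N_{\epsilon_0,h}$.

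The main technical obstacle is the uniform control of the Born--Oppenheimer correction coming from the $\sigma$-dependence of $u_0$: both the remainder $\hbar^4\int|\nabla_\sigma u_0|^2|f|^2$ and the cross term $\mathcal C(f,\psi_\perp)$ must be absorbed into the $\pm C_\pm h^2$ perturbations of the effective operators. This requires the regularity of the fiber ground state as a function of $(\sigma,\tau,\hbar)$ and, more delicately, that the Taylor remainders in $\widehat a$ and $\widehat g$ contribute only at order $O(\hbar^4)$ despite $\tau$ ranging up to $\hbar^{-1}$; this hinges on the exponential decay $|u_0|\lesssim e^{-\tau}$, which renders the weighted moments $\int\tau^k|u_0|^2\widehat a\dx\tau$ bounded uniformly in $\hbar$.
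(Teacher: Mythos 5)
Your proposal is essentially correct and follows the same Born--Oppenheimer strategy as the paper: decompose with respect to a $\sigma$-dependent Feshbach projection onto the transverse ground state, control the tangential cross terms via the Born--Oppenheimer correction and the spectral gap, and conclude with min-max. The main methodological difference is your choice of fiber operator: you project onto the ground state $u_0(\sigma,\cdot,\hbar)$ of the exact transverse operator carrying the full weight $\widehat a(\sigma,\cdot)$, whereas the paper first replaces $\widehat a$ by its first-order Taylor approximant $\widehat m(\sigma,\tau)=1-\hbar^2\kappa(\sigma)\tau$ (Lemmas~\ref{lem.dtau}, \ref{lem.dsigma}, \ref{lem.approx}), then projects onto the ground state $v_{\kappa(\sigma),\hbar}$ of the model operator $\mathcal H^{\{T\}}_B$ with $B=\hbar^2\kappa(\sigma)$. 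Your choice has the advantage of an \emph{exact} decoupling of the transverse quadratic form, bypassing the metric-approximation step; the paper's choice has the advantage that $v_{\kappa(\sigma),\hbar}$ depends on $\sigma$ only through the scalar $B=\hbar^2\kappa(\sigma)$, so the Born--Oppenheimer correction is immediately seen to be $R_\hbar=\mathcal O(\hbar^4)$ via Lemma~\ref{lem.h12}, and the relevant one-dimensional spectral facts are all isolated in the appendix. Two small comments. First, your stated bound $\nabla_\sigma u_0=\mathcal O(1)$ (hence $R_\hbar=\mathcal O(1)$) is considerably weaker than what actually holds: since $\partial_\sigma\widehat a(\sigma,\tau)=\mathcal O(\hbar^2\tau)$, one in fact has $\nabla_\sigma u_0=\mathcal O(\hbar^2)$ and $R_\hbar=\mathcal O(\hbar^4)$, matching the paper. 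Your $\mathcal O(1)$ bound does still close the argument after choosing the Cauchy--Schwarz parameters separately for each cross term (taking $\epsilon\sim\hbar^2$ on the $\nabla_\sigma f\cdot\psi_\perp$ term, absorbed by the gap, and $\epsilon\sim 1$ on the $f\cdot\nabla_\sigma\psi_\perp$ term, absorbed by the kinetic energy of $\psi_\perp$), but the sharper bound would make the bookkeeping much less delicate, and you should say why it holds rather than assert the weaker one. Second, under the paper's hypothesis $\Gamma\in\mathcal C^3$, the shape operator $L(\sigma)$ is only $\mathcal C^1$, so $\widehat a$ and hence $u_0$ are only $\mathcal C^1$ in $\sigma$; your claim that $u_0$ is $\mathcal C^2$ in $\sigma$ is a slight overreach (only $\mathcal C^1$ is available and only $\mathcal C^1$ is needed).
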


\subsection{The corrected Feshbach projection}
Let us introduce 
\[\mathcal{H}_{\kappa(\sigma),\hbar} = \mathcal H^{\{T\}}_{B}\,,\]
with 
\[B=h^{\frac{1}{2}}\kappa(\sigma)=\hbar^2\kappa(\sigma)\] 
and where $\mathcal H^{\{T\}}_{B}$ is defined in \eqref{eq:H0b}. We introduce for $\sigma \in \Gamma$ the Feshbach projection $\Pi_\sigma$  on the normalized groundstate of  $ \mathcal{H}_{\kappa(\sigma),\hbar}$, denoted by $v_{\kappa(\sigma), \hbar}$,
\[
\Pi_\sigma  \psi=\langle\psi, v_{\kappa(\sigma), \hbar}\rangle_{L^2((0,T),(1-B\tau) \dx\tau)} v_{\kappa(\sigma), \hbar}\,.
\]
We also let
\[\Pi_\sigma^\perp=\mathsf{Id}-\Pi_\sigma \]
and
\begin{align}\label{deff}
&f(\sigma)=\langle\psi, v_{\kappa(\sigma), \hbar}\rangle_{L^2((0,T),(1-B\tau) \dx\tau)}, \\
\label{defR}
&R_{\hbar}(\sigma)=\|\nabla_{\sigma} v_{\kappa(\sigma), \hbar}\|^2_{L^2((0,T),\,(1-B\tau) \dx\tau)}\,,
\end{align}
The quantity $R_{\hbar}$ is sometimes called \enquote{Born-Oppenheimer correction}. It measures the commutation defect between $\nabla_{\sigma}$ and $\Pi_{\sigma}$.
\begin{remark}
\rm In a first approximation, one could try to use the projection on $v_{0,\hbar}$, but one would lose the uniformity in our estimates. Note that the idea to consider a corrected Feshbach projection appears in many different contexts: WKB analysis (see for instance \cite[Sections 2.4~\&~3.2]{BHR}, \cite{HK-tams} and \cite{HKR15}), norm resolvent convergence (see for instance \cite[Section 4.2]{KR13}) or space/time adiabatic limits (see \cite[Chapter 3]{T03}).
\end{remark}

\subsection{Approximation of the metrics}
In this section, we introduce an approximated quadratic form by approximating first the metrics. For that purpose, let us introduce the approximation of the weight:
\[\tilde m(s, t)=1-t\kappa(s)\,,\qquad\kappa(s)=\mathsf{Tr}\, L(s)\,.\]
We have
\[|\tilde a(s,t)-\tilde m(s,t)|\leq Ct^2\,.\]
Let us now state two elementary lemmas.
\begin{lemma}\label{lem.dtau}
We have the estimate, for all $\psi\in \widehat{V}_{T}$,
\begin{multline*}
\left|\int_{\widehat{\mathcal{V}}_{T }}|\partial_\tau \psi|^2\widehat a \dx\Gamma \dx\tau-\int_{\widehat{\mathcal{V}}_{T }}|\partial_\tau \psi|^2 \widehat m \dx\Gamma \dx\tau\right|\\
\leq C\hbar^4\int_\Gamma |f(\sigma)|^2\dx \Gamma+C\hbar^2\int_{\widehat{\mathcal{V}}_{T }}|\partial_{\tau}\Pi^\perp_{\sigma}\psi|^2\dx\Gamma\dx\tau\,,
\end{multline*}
where $\widehat m(\sigma,\tau)=\tilde m(\sigma,\h^2\tau)$.
\end{lemma}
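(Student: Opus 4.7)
The plan is to control the difference pointwise via the estimate $|\tilde a(s,t)-\tilde m(s,t)|\le Ct^2$, which after the rescaling $t=h^{\frac12}\tau=\hbar^2\tau$ gives
\[
|\widehat a(\sigma,\tau)-\widehat m(\sigma,\tau)|\le C\hbar^4\tau^2\,,
\]
uniformly in $(\sigma,\tau)\in\widehat{\mathcal V}_T$. Plugging this into the left-hand side, the estimate reduces to bounding
\[
\hbar^4\int_{\widehat{\mathcal V}_{T}}|\partial_\tau\psi|^2\,\tau^2\,\dx\Gamma\dx\tau\,.
\]

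The main step is to exploit the decomposition $\psi=\Pi_\sigma\psi+\Pi_\sigma^\perp\psi$. On the one-dimensional fibre, $\Pi_\sigma\psi=f(\sigma)\,v_{\kappa(\sigma),\hbar}(\tau)$, so that
\[
|\partial_\tau\psi|^2\le 2|f(\sigma)|^2\,|\partial_\tau v_{\kappa(\sigma),\hbar}(\tau)|^2+2|\partial_\tau\Pi_\sigma^\perp\psi|^2\,.
\]
For the projected part, I would use the smoothness and uniform decay of the ground state of the transverse model $\mathcal H_{\kappa(\sigma),\hbar}$ (with $B=\hbar^2\kappa(\sigma)$ small) to obtain a uniform bound
\[
\sup_{\sigma\in\Gamma}\int_0^T\tau^2|\partial_\tau v_{\kappa(\sigma),\hbar}(\tau)|^2(1-B\tau)\dx\tau\le C\,,
\]
which, after the harmless factor $\widehat a$, produces the first term $C\hbar^4\int_\Gamma|f(\sigma)|^2\dx\Gamma$. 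For the transverse part I would not try to exploit any cancellation but simply use that $\tau\le T=\hbar^{-1}$, so $\tau^2\le\hbar^{-2}$, which gives
\[
\hbar^4\int_{\widehat{\mathcal V}_{T}}|\partial_\tau\Pi_\sigma^\perp\psi|^2\,\tau^2\,\dx\Gamma\dx\tau\le\hbar^2\int_{\widehat{\mathcal V}_{T}}|\partial_\tau\Pi_\sigma^\perp\psi|^2\,\dx\Gamma\dx\tau\,,
\]
up to the bounded weight $\widehat a$. Summing these two contributions yields the claimed inequality.

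I expect the main obstacle to be the uniform-in-$\sigma$ control of the weighted $L^2$ norm $\|\tau\,\partial_\tau v_{\kappa(\sigma),\hbar}\|_{L^2((0,T),(1-B\tau)\dx\tau)}$. It is not hard to obtain such a bound because the operator $\mathcal H_{B}^{\{T\}}$ is a small perturbation of the half-line operator $-\partial_\tau^2$ with the Robin condition $\partial_\tau u(0)=-u(0)$, whose groundstate $\sqrt{2}\,e^{-\tau}$ has super-exponential moments; nevertheless, making the dependence on $\sigma$ and $\hbar$ uniform requires invoking elliptic regularity and the exponential decay estimates for $v_{\kappa(\sigma),\hbar}$ established when setting up $\mathcal H_{B}^{\{T\}}$. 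Once this uniform transverse estimate is in hand, the rest of the argument is just the Cauchy--Schwarz splitting above combined with the brute bound $\tau\le T=\hbar^{-1}$ on the orthogonal component.
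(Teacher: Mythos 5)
Your proposal is correct and follows essentially the same route as the paper's proof: the pointwise bound $|\widehat a-\widehat m|\leq C\hbar^4\tau^2$, the orthogonal decomposition $\psi=\Pi_\sigma\psi+\Pi_\sigma^\perp\psi$, the brute bound $\tau^2\leq T^2=\hbar^{-2}$ on the orthogonal part, and the weighted moment bound on $\partial_\tau v_{\kappa(\sigma),\hbar}$ via the one-dimensional Agmon estimate (Proposition~\ref{prop.AgmonuBT}). The only difference is cosmetic: the paper invokes Proposition~\ref{prop.AgmonuBT} directly for the uniform transverse estimate, whereas you outline the perturbative/decay heuristic behind it, but the mechanism is identical.
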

\begin{proof}
We have
\[
\left|\int_{\widehat{\mathcal V}_{T }}|\partial_\tau \psi|^2\widehat a \dx\Gamma \dx\tau-\int_{\widehat{\mathcal V}_{T }}|\partial_\tau \psi|^2 \widehat m \dx\Gamma \dx\tau\right|\leq C\hbar^4\int_{\widehat{\mathcal V}_{T }}\tau^2|\partial_{\tau}\psi|^2\dx\Gamma\dx\tau \,.
\]
Then, we use an orthogonal decomposition to get
\begin{align*}
&\left|\int_{\widehat{\mathcal V}_{T }}|\partial_\tau \psi|^2\widehat a \dx\Gamma \dx\tau-\int_{\widehat{\mathcal V}_{T }}|\partial_\tau \psi|^2 \widehat m \dx\Gamma \dx\tau\right|\\
&\leq \tilde C\hbar^4\left(\int_{\widehat{\mathcal V_{T }}}\tau^2|\partial_{\tau}\Pi_{\sigma}\psi|^2\dx\Gamma\dx\tau+\int_{\widehat{\mathcal V}_{T }}\tau^2|\partial_{\tau}\Pi^\perp_{\sigma}\psi|^2\dx\Gamma\dx\tau\right)\\
&\leq  C\hbar^4\int_\Gamma |f(\sigma)|^2\left(\int_0^T \tau^2 |\partial_\tau v_{\kappa(\sigma), \hbar}|^2 \dx\tau \right) \dx \Gamma+C\hbar^2\int_{\widehat{\mathcal V}_{T }}|\partial_{\tau}\Pi^\perp_{\sigma}\psi|^2\dx\Gamma\dx\tau\,,
\end{align*}
where we have used that $T=\hbar^{-1}$ for the orthogonal component. The result then follows from the Agmon estimates in one dimension (Proposition \ref{prop.AgmonuBT}).

\end{proof}
\begin{lemma}\label{lem.dsigma}
We have the estimate, for all $\psi\in \widehat{V}_{T}$,
\begin{multline*}
\left|\int_{\widehat{\mathcal{V}}_{T }}\langle\nabla_{\sigma} \psi,\widehat g^{-1}\nabla_{\sigma} \psi\rangle\widehat a\dx\Gamma \dx\tau-\int_{\widehat{\mathcal V}_{T }}\langle\nabla_{\sigma} \psi, \nabla_{\sigma} \psi\rangle\widehat m\dx\Gamma \dx\tau\right|\\
\leq C  \int_\Gamma \left( \hbar^2\|\nabla_\sigma f(\sigma)\|^2 + \hbar R_\hbar(\sigma)|f(\sigma)|^2 \right) \dx \Gamma + C \hbar \int_{\widehat{\mathcal{V}}_{T }}\|\nabla_{\sigma}\Pi^\perp_{\sigma}\psi\|^2\dx\Gamma\dx\tau\,\,.
\end{multline*}

\end{lemma}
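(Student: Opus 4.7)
The plan is to replace $\widehat g^{-1}\widehat a$ by $\widehat m\,\Id$ up to a pointwise error of order $\hbar^2\tau$ and then use the decomposition $\psi=f(\sigma)v_{\kappa(\sigma),\hbar}+\Pi_\sigma^\perp\psi$ to split the resulting $L^2$-integral of $\tau\,\|\nabla_\sigma\psi\|^2$ into the three contributions appearing on the right-hand side.

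Since $\widehat g(\sigma,\tau)=(\Id-\hbar^2\tau L(\sigma))^2$ and $\tau\leq T=\hbar^{-1}$, one has $\|\hbar^2\tau L(\sigma)\|\leq C\hbar$, so a Neumann series gives $\widehat g^{-1}=\Id+O(\hbar^2\tau)$ uniformly, while the bound $|\tilde a-\tilde m|\leq Ct^2$ recalled above translates into $|\widehat a-\widehat m|\leq C\hbar^4\tau^2$. Since $\widehat m$ is uniformly bounded for $\tau\leq T$,
\[\widehat g^{-1}\widehat a-\widehat m\,\Id=O(\hbar^2\tau)\]
as a $\sigma$-dependent symmetric tensor on $T^*\Gamma$, uniformly on $\widehat{\mathcal V}_{T}$. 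The left-hand side of the lemma is therefore controlled by
\[C\hbar^2\int_{\widehat{\mathcal V}_{T}}\tau\,\|\nabla_\sigma\psi\|^2\dx\Gamma\dx\tau.\]

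Writing $\psi=f(\sigma)v_{\kappa(\sigma),\hbar}+\Pi_\sigma^\perp\psi$ and applying Young's inequality twice yields pointwise
\[\|\nabla_\sigma\psi\|^2\leq 4|\nabla_\sigma f(\sigma)|^2\,|v_{\kappa(\sigma),\hbar}(\tau)|^2+4|f(\sigma)|^2\,\|\nabla_\sigma v_{\kappa(\sigma),\hbar}(\tau)\|^2+2\|\nabla_\sigma\Pi_\sigma^\perp\psi\|^2.\]
For the first term, the one-dimensional Agmon estimates of Proposition~\ref{prop.AgmonuBT} give $\int_0^T\tau\,|v_{\kappa(\sigma),\hbar}|^2\dx\tau\leq C$ uniformly in $\sigma$, producing $C\hbar^2\int_\Gamma|\nabla_\sigma f|^2\dx\Gamma$. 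For the second term, the weight $(1-\hbar^2\tau\kappa(\sigma))$ is pinched in $[1/2,3/2]$ for $\tau\leq T$ and $\hbar$ small, hence
\[\int_0^T\tau\,\|\nabla_\sigma v_{\kappa(\sigma),\hbar}\|^2\dx\tau\leq T\int_0^T\|\nabla_\sigma v_{\kappa(\sigma),\hbar}\|^2\dx\tau\leq 2\hbar^{-1}R_\hbar(\sigma),\]
which, multiplied by $\hbar^2|f(\sigma)|^2$ and integrated in $\sigma$, contributes $C\hbar\int_\Gamma R_\hbar(\sigma)|f(\sigma)|^2\dx\Gamma$. For the third term the same crude bound $\tau\leq T$ yields $C\hbar\int_{\widehat{\mathcal V}_{T}}\|\nabla_\sigma\Pi_\sigma^\perp\psi\|^2\dx\Gamma\dx\tau$. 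Summing the three contributions completes the proof.

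The delicate step is the second one: we have no better control than $\tau\leq T=\hbar^{-1}$ for $\|\nabla_\sigma v_{\kappa(\sigma),\hbar}\|$, which costs one power of $\hbar^{-1}$ and is precisely what forces the coefficient $\hbar$ (rather than $\hbar^2$) in front of $R_\hbar(\sigma)$ in the final estimate; a finer analysis, for instance Agmon-type decay for $\nabla_\sigma v_{\kappa(\sigma),\hbar}$, would improve this factor but is not needed here.
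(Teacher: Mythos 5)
Your proof is correct and follows essentially the same route as the paper: bound $\widehat g^{-1}\widehat a-\widehat m\,\Id$ pointwise by $O(\hbar^2\tau)$, split $\nabla_\sigma\psi$ via $\psi=f(\sigma)v_{\kappa(\sigma),\hbar}+\Pi_\sigma^\perp\psi$, and control the resulting three pieces with the one-dimensional Agmon decay of $v_{\kappa(\sigma),\hbar}$, the definition of $R_\hbar$, and the crude bound $\tau\leq T=\hbar^{-1}$. The only cosmetic difference is that the paper keeps the two error terms $\hbar^4\tau^2+\hbar^2\tau$ separate before projecting, whereas you absorb $\hbar^4\tau^2\leq\hbar^2\tau$ directly using $\tau\leq T$, which is an equally valid simplification.
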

\begin{proof}
First, we write
\begin{align*}
&\left|\int_{\widehat{\mathcal V}_{T }}\langle\nabla_{\sigma} \psi,\widehat g^{-1}\nabla_{\sigma} \psi\rangle\widehat a\dx\Gamma \dx\tau-\int_{\widehat{\mathcal V}_{T }}\langle\nabla_{\sigma} \psi, \nabla_{\sigma} \psi\rangle\widehat m\dx\Gamma \dx\tau\right|\\
&\leq \int_{\widehat{\mathcal V}_{T }} \|\nabla_{\sigma} \psi\|^2|\widehat a-\widehat m|\dx\Gamma \dx\tau
+\int_{\widehat{\mathcal V}_{T }}|\langle\nabla_{\sigma} \psi,(\widehat  g^{-1} - \mathsf{Id})\nabla_{\sigma} \psi\rangle|\widehat a\dx\Gamma \dx\tau\\
&\leq C  \int_{\widehat{\mathcal V}_{T }} (\hbar^4 \tau^2+\hbar^2 \tau) \|\nabla_{\sigma} \psi\|^2\dx\Gamma \dx\tau\,.
\end{align*}
Then, by an orthogonal decomposition, we get
\begin{align*}
&\left|\int_{\widehat{\mathcal V}_{T }}\langle\nabla_{\sigma} \psi,\widehat g^{-1}\nabla_{\sigma} \psi\rangle\widehat a\dx\Gamma \dx\tau-\int_{\widehat{\mathcal V}_{T }}\langle\nabla_{\sigma} \psi, \nabla_{\sigma} \psi\rangle\widehat m\dx\Gamma \dx\tau\right|\\
&\leq C  \int_{\widehat{\mathcal V}_{T }} (\hbar^4\tau^2 +\hbar^2 \tau) \|\nabla_{\sigma} \Pi_{\sigma} \psi\|^2\dx\Gamma \dx\tau + C \hbar \int_{\widehat{\mathcal V}_{T}} \|\nabla_{\sigma} \Pi_{\sigma}^\perp \psi\|^2\dx\Gamma \dx\tau\,,
\end{align*}
where we used $T=\hbar^{-1}$ on the orthogonal part.

Finally, we use the naive inequality
\[
\|\nabla_{\sigma} \Pi_{\sigma} \psi\|^2 \leq 2 \left( \|\nabla_{\sigma} f(\sigma)\|^2 |v_{\kappa(\sigma),\hbar}|^2 + \|\nabla_{\sigma} v_{\kappa(\sigma),\hbar}\|^2 |f(\sigma)|^2 \right)\,,
\]
and the conclusion again follows from Agmon estimates.
\end{proof}

Let us now introduce the approximated quadratic form
\begin{equation}\label{eq.Qapp}
 \widehat{\mathcal{Q}}_\hbar^{\mathsf{app}}(\psi) = \int_{\widehat{\mathcal{V}}_{T }}\Big(\hbar^4\|\nabla_{\sigma} \psi\|^2
 +|\partial_\tau \psi|^2\Big)\widehat m \dx\Gamma \dx\tau
 -\int_\Gamma |\psi(\sigma,0)|^2\dx\Gamma\,.
\end{equation}
The sense of this approximation is quantified by the following lemma (that is a consequence of Lemmas \ref{lem.dtau} and \ref{lem.dsigma}).
\begin{lemma}\label{lem.approx}
We have, for all $\psi\in\widehat{V}_{T}$,
\begin{align*}
&\left|\widehat{\mathcal{Q}}_{\hbar}(\psi)-\widehat{\mathcal{Q}}_{\hbar}^{\mathsf{app} }(\psi)\right|\\
&\leq  C\hbar^4\int_\Gamma |f(\sigma)|^2\dx \sigma+C\hbar^2\int_{\widehat{\mathcal{V}}_{T }}|\partial_{\tau}\Pi^\perp_{\sigma}\psi|^2\dx\Gamma\dx\tau\\
&+C  \int_\Gamma \left( \hbar^6\|\nabla_\sigma f(\sigma)\|^2 + \hbar^5 R_\hbar(\sigma)|f(\sigma)|^2 \right) \dx \Gamma + C \hbar^5 \int_{\widehat{\mathcal{V}}_{T }}\|\nabla_{\sigma}\Pi^\perp_{\sigma}\psi\|^2\dx\Gamma\dx\tau\,.
\end{align*}
\end{lemma}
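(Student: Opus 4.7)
The proof is essentially a bookkeeping exercise combining the two preceding lemmas, so I would keep it very short.

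First I would observe that the boundary trace term $-\int_\Gamma |\psi(\sigma,0)|^2\dx\Gamma$ is identical in $\widehat{\mathcal{Q}}_{\hbar}$ and $\widehat{\mathcal{Q}}_{\hbar}^{\mathsf{app}}$, so it cancels in the difference. By the triangle inequality, I split
\[
\widehat{\mathcal{Q}}_{\hbar}(\psi)-\widehat{\mathcal{Q}}_{\hbar}^{\mathsf{app}}(\psi)
= A(\psi)+\hbar^4 B(\psi),
\]
where
\[
A(\psi)=\int_{\widehat{\mathcal{V}}_{T}}|\partial_\tau \psi|^2(\widehat a-\widehat m)\dx\Gamma\dx\tau,
\]
\[
B(\psi)=\int_{\widehat{\mathcal{V}}_{T}}\Bigl(\langle\nabla_{\sigma}\psi,\widehat g^{-1}\nabla_{\sigma}\psi\rangle\widehat a-\|\nabla_{\sigma}\psi\|^2\widehat m\Bigr)\dx\Gamma\dx\tau.
\]

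Next I would apply Lemma \ref{lem.dtau} directly to $|A(\psi)|$, yielding the bound
\[
|A(\psi)|\leq C\hbar^4\int_\Gamma |f(\sigma)|^2\dx\Gamma+C\hbar^2\int_{\widehat{\mathcal{V}}_{T}}|\partial_{\tau}\Pi^\perp_{\sigma}\psi|^2\dx\Gamma\dx\tau,
\]
which matches the first line of the claimed estimate verbatim. Similarly, Lemma \ref{lem.dsigma} gives
\[
|B(\psi)|\leq C\int_\Gamma\bigl(\hbar^2\|\nabla_\sigma f(\sigma)\|^2+\hbar R_\hbar(\sigma)|f(\sigma)|^2\bigr)\dx\Gamma+C\hbar\int_{\widehat{\mathcal{V}}_{T}}\|\nabla_{\sigma}\Pi^\perp_{\sigma}\psi\|^2\dx\Gamma\dx\tau,
\]
and multiplying this estimate by $\hbar^4$ produces the second line of the claim, with the shifted exponents $\hbar^6$, $\hbar^5$, and $\hbar^5$ as stated.

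Adding the two contributions and renaming the constant concludes the proof. There is no substantive obstacle here: the two source lemmas have been arranged precisely so that $A$ and $\hbar^4 B$ are exactly the two pieces of the difference, and the only thing to track is the factor of $\hbar^4$ in front of the tangential part. I would simply make the decomposition explicit and invoke the two lemmas.
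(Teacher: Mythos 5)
Your proof is correct and coincides with the paper's (implicit) argument: the paper simply states that the lemma "is a consequence of Lemmas \ref{lem.dtau} and \ref{lem.dsigma}", and your decomposition into $A(\psi)+\hbar^4 B(\psi)$, followed by a direct application of those two lemmas and the $\hbar^4$ rescaling, is exactly the bookkeeping the paper leaves to the reader.
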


\subsection{Upper bound}
The following proposition provides an upper bound of the quadratic form on a subspace.
\begin{proposition}\label{upper_bound}
There exist $C>0$, $\hbar_0>0$ such that, for all $\psi \in \widehat{\mathcal D}_{T }$ and $\hbar \in (0,\hbar_0)$, we have
\begin{multline*}
\widehat{\mathcal{Q}}_{\hbar}(\Pi_{\sigma} \psi) \leq 
\int_\Gamma \hbar^4(1+C\hbar^2)\|\nabla_\sigma f(\sigma)\|^2  \dx \Gamma \\
 + \int_\Gamma \left( \lambda_1(\mathcal{H}_{\kappa(\sigma),\hbar}) +C\hbar^4 + \hbar^4(1+C\hbar)R_\hbar(\sigma) \right)|f(\sigma)|^2 \dx \Gamma\,. 
\end{multline*}
\end{proposition}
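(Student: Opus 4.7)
The plan is to exploit that $\Pi_\sigma$ is a genuine projection: $\Pi_\sigma^\perp(\Pi_\sigma \psi)=0$, so the error terms in Lemma~\ref{lem.approx} involving the perpendicular component automatically vanish. Applying Lemma~\ref{lem.approx} with $\psi$ replaced by $\Pi_\sigma \psi$ (and noting that the coefficient associated to $\Pi_\sigma \psi$ via \eqref{deff} is again $f$) I would obtain
\begin{equation*}
\widehat{\mathcal{Q}}_\hbar(\Pi_\sigma \psi) \leq \widehat{\mathcal{Q}}_\hbar^{\mathsf{app}}(\Pi_\sigma \psi) + C\hbar^4 \int_\Gamma |f|^2 \dx \Gamma + C\hbar^6 \int_\Gamma \|\nabla_\sigma f\|^2 \dx \Gamma + C\hbar^5 \int_\Gamma R_\hbar(\sigma) |f|^2 \dx \Gamma.
\end{equation*}
The task then reduces to evaluating the approximated form on the factorized state $\Pi_\sigma \psi = f(\sigma)\, v_{\kappa(\sigma),\hbar}(\tau)$ by separation of variables.

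For the $\tau$-derivative contribution together with the Robin boundary term, I would use that $\widehat m = 1 - B\tau$ with $B=\hbar^2\kappa(\sigma)$ and that $v_{\kappa(\sigma),\hbar}$ is the groundstate of $\mathcal{H}_{\kappa(\sigma),\hbar}$, normalized in $L^2((0,T),(1-B\tau)\dx\tau)$. Integrating fibrewise in $\tau$ yields
\begin{equation*}
\int_0^T |f|^2 |\partial_\tau v_{\kappa(\sigma),\hbar}|^2\, \widehat m \dx\tau - |f(\sigma)|^2 |v_{\kappa(\sigma),\hbar}(0)|^2 = \lambda_1\bigl(\mathcal{H}_{\kappa(\sigma),\hbar}\bigr)\, |f(\sigma)|^2,
\end{equation*}
and integration over $\Gamma$ produces the $\lambda_1|f|^2$ contribution of the statement.

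For the $\sigma$-derivative part I would expand $\nabla_\sigma(f v_{\kappa(\sigma),\hbar}) = (\nabla_\sigma f)\, v_{\kappa(\sigma),\hbar} + f\, \nabla_\sigma v_{\kappa(\sigma),\hbar}$. The two diagonal terms integrate against $\widehat m \dx\tau$ to $\|\nabla_\sigma f\|^2$ (by normalization of $v_{\kappa(\sigma),\hbar}$) and $R_\hbar(\sigma)|f|^2$ (by the very definition~\eqref{defR}). The main obstacle I expect is the cross term. To handle it sharply, I would differentiate the normalization identity $\int_0^T v_{\kappa(\sigma),\hbar}^2\, \widehat m \dx\tau = 1$ in $\sigma$, which gives
\begin{equation*}
\int_0^T v_{\kappa(\sigma),\hbar}\, \nabla_\sigma v_{\kappa(\sigma),\hbar}\, \widehat m \dx\tau = \tfrac{1}{2}\hbar^2\, \nabla_\sigma \kappa(\sigma) \int_0^T \tau\, v_{\kappa(\sigma),\hbar}^2 \dx\tau,
\end{equation*}
a covector of size $O(\hbar^2)$ by the one-dimensional Agmon estimates of Proposition~\ref{prop.AgmonuBT}. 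Multiplying by $\hbar^4$, the cross term is pointwise bounded by $C\hbar^6 |f|\, \|\nabla_\sigma f\|$, and a standard Cauchy--Schwarz absorbs it into $C\hbar^6 \|\nabla_\sigma f\|^2 + C\hbar^6 |f|^2$. Collecting all terms, the coefficient $\hbar^4(1+C\hbar^2)$ in front of $\|\nabla_\sigma f\|^2$ is made of the main $\hbar^4$ plus the $C\hbar^6$ from the cross term, while $\hbar^4(1+C\hbar)R_\hbar$ is made of the main $\hbar^4 R_\hbar$ plus the $C\hbar^5 R_\hbar$ error inherited from Lemma~\ref{lem.approx}; the announced upper bound follows.
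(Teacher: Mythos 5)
Your proof is correct and follows essentially the same route as the paper: apply Lemma~\ref{lem.approx} to $\Pi_\sigma\psi$ (the $\Pi_\sigma^\perp$ error terms vanish since $\Pi_\sigma$ is a projection), evaluate $\widehat{\mathcal{Q}}_\hbar^{\mathsf{app}}(f\,v_{\kappa(\sigma),\hbar})$ by separation of variables, and control the cross term by differentiating the normalization identity together with the one-dimensional Agmon estimates of Proposition~\ref{prop.AgmonuBT}. You merely spell out the $\mathcal{O}(\hbar^2)$ bound on $\int_0^T v_{\kappa(\sigma),\hbar}\,\nabla_\sigma v_{\kappa(\sigma),\hbar}\,\widehat m\,\mathrm{d}\tau$ a little more explicitly than the paper does.
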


\begin{proof}
First, we use Lemma \ref{lem.approx} (note that the orthogonal projections disappear). Then, we are reduced to estimates on the approximated quadratic form.
By writing $ \Pi_\sigma \psi = f(\sigma) v_{\kappa(\sigma),\hbar} $ and considering the derivative of this product, we get
\begin{align*}
&\widehat{\mathcal{Q}}_{\hbar}^{\mathsf{app} }( \Pi_\sigma \psi)=\int_{\widehat{\mathcal V}_{T}}\Big(\hbar^4\|\nabla_{\sigma} \Pi_{\sigma} \psi\|^2+|\partial_\tau \Pi_{\sigma} \psi|^2\Big)\widehat m \dx\Gamma \dx\tau-\int_\Gamma |\Pi_{\sigma} \psi(\sigma,0)|^2 \dx\Gamma \\
&= \hbar^4 \int_\Gamma \left( \|\nabla_\sigma f(\sigma)\|^2 + \big(R_\hbar(\sigma)+q_{\kappa(\sigma),\hbar}(v_{\kappa(\sigma),\hbar})\big) |f(\sigma)|^2 \right)  \dx \Gamma  \\
&+ 2 \hbar^4 \int_\Gamma f(\sigma) \left\langle \nabla_\sigma f(\sigma), \int_0^T v_{\kappa(\sigma),\hbar} \nabla_\sigma v_{\kappa(\sigma),\hbar} \widehat{m} \dx \tau \right\rangle  \dx \Gamma\,.
\end{align*}  
where $q_{\kappa(\sigma),\hbar}$ is the quadratic form associated with $\mathcal{H}_{\kappa(\sigma),\hbar}$. 
By definition, we have 
\[q_{\kappa(\sigma),\hbar}(v_{\kappa(\sigma),\hbar})=\lambda_1(\mathcal{H}_{\kappa(\sigma),\hbar})\,.\]
Then we notice from the normalization of $v_{\kappa(\sigma),\hbar}$ that
\[\nabla_\sigma \left( \int_0^T |v_{\kappa(\sigma),\hbar}|^2   \widehat{m} \dx \tau \right) = 0\,,\]
and since $\nabla_\sigma B=\hbar^2 \nabla_\sigma \kappa(\sigma)$, we have
\begin{equation*}
\int_0^T v_{\kappa(\sigma),\hbar} \nabla_\sigma v_{\kappa(\sigma),\hbar}  \widehat{m} \dx \tau = \mathcal{O}(\hbar^2)\,.
\end{equation*}
This implies the estimate:
\begin{multline}\label{eq.quasi-orth}
\left|\hbar^4 \int_\Gamma  f(\sigma) \left\langle \nabla_\sigma f(\sigma), \int_0^T v_{\kappa(\sigma),\hbar} \nabla_\sigma v_{\kappa(\sigma),\hbar}  \widehat{m} \dx \tau \right\rangle \dx \Gamma\right| \\
\leq C \hbar^6 \int_\Gamma \left(|f(\sigma)|^2+\|\nabla_{\sigma} f(\sigma)\|^2\right) \dx \Gamma\,,
\end{multline}
and the conclusion follows.
\end{proof}

\subsection{Lower bound}

Let us now establish the following lower bound of the quadratic form.
\begin{proposition}\label{lower_bound}
There exist $C>0$, $\hbar_0>0$ such that, for all $\psi \in \widehat{\mathcal D}_{T }$ and $\hbar \in (0,\hbar_0)$, we have
\begin{multline*}
\widehat{\mathcal{Q}}_{\hbar}(\psi) \\
\geq \int_\Gamma \left( \hbar^4(1-C\hbar^2)\|\nabla_\sigma f(\sigma)\|^2 + \left( \lambda_1(\mathcal{H}_{\kappa(\sigma),\hbar}) -C(\hbar^4+\hbar^{2}R_\hbar(\sigma) \right) |f(\sigma)|^2 \right) \dx \Gamma \\
+ \int_\Gamma  \hbar^4(1-C\hbar)\|\nabla_\sigma \Pi_\sigma^\perp \psi\|^2_{L^2( \widehat{m}\dx \tau)} \dx \Gamma  \\
+ \int_\Gamma \big(  (1-C\hbar^2)  \lambda_2(\mathcal{H}_{\kappa(\sigma),\hbar}) - C (\hbar^6+\hbar^2 R_\hbar(\sigma)) \big) \|\Pi_\sigma^\perp \psi\|^2_{L^2( \widehat{m}\dx \tau)}  \dx \Gamma\,.
\end{multline*}
\end{proposition}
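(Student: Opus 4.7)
The proof will mirror the upper bound argument but needs care because we now test against a general $\psi \in \widehat{\mathcal D}_T$ rather than only its Feshbach projection, so both cross terms and commutation defects between $\nabla_\sigma$ and $\Pi_\sigma$ must be controlled from below.

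\textbf{Step 1: Reduction to the approximated form.}
First, I would apply Lemma \ref{lem.approx} to replace $\widehat{\mathcal{Q}}_\hbar(\psi)$ by $\widehat{\mathcal{Q}}_\hbar^{\mathsf{app}}(\psi)$, noting that the error terms $\hbar^4\|f\|^2$, $\hbar^6\|\nabla_\sigma f\|^2$, $\hbar^5 R_\hbar |f|^2$, and the orthogonal pieces $\hbar^2\|\partial_\tau \Pi_\sigma^\perp \psi\|^2$ and $\hbar^5\|\nabla_\sigma \Pi_\sigma^\perp \psi\|^2$ are all absorbable into the claimed lower bound (the orthogonal errors are of lower order than the $\lambda_2$ and $\hbar^4 \|\nabla_\sigma \Pi_\sigma^\perp \psi\|^2$ terms we will gain).

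\textbf{Step 2: Orthogonal decomposition of the approximated form.}
I would write $\psi = \Pi_\sigma \psi + \Pi_\sigma^\perp \psi$ and expand $\widehat{\mathcal{Q}}_\hbar^{\mathsf{app}}(\psi)$. The $\tau$-part together with the boundary term is, for each fixed $\sigma$, the quadratic form $q_{\kappa(\sigma),\hbar}$ acting on $L^2((0,T),(1-B\tau)\,d\tau)$. Since $\Pi_\sigma$ is the spectral projector onto its groundstate and commutes with this operator, the decomposition is orthogonal for that part. This yields
\[
\int_\Gamma \lambda_1(\mathcal H_{\kappa(\sigma),\hbar}) |f(\sigma)|^2 \dx\Gamma + \int_\Gamma q_{\kappa(\sigma),\hbar}(\Pi_\sigma^\perp \psi) \dx\Gamma,
\]
and by the min-max principle the second integrand is bounded below by $\lambda_2(\mathcal H_{\kappa(\sigma),\hbar}) \|\Pi_\sigma^\perp \psi\|^2_{L^2(\widehat m\,d\tau)}$.

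\textbf{Step 3: Handling the $\sigma$-derivative.}
For the tangential part $\hbar^4 \int \|\nabla_\sigma \psi\|^2 \widehat m$, I would use the identity
\[
\nabla_\sigma \Pi_\sigma \psi = (\nabla_\sigma f)\, v_{\kappa(\sigma),\hbar} + f(\sigma) \nabla_\sigma v_{\kappa(\sigma),\hbar},
\]
expand $\|\nabla_\sigma \psi\|^2 = \|\nabla_\sigma \Pi_\sigma\psi\|^2 + 2\Re\langle \nabla_\sigma\Pi_\sigma\psi,\nabla_\sigma\Pi_\sigma^\perp\psi\rangle + \|\nabla_\sigma\Pi_\sigma^\perp\psi\|^2$, and then keep $\|\nabla_\sigma f\|^2$ and $\|\nabla_\sigma \Pi_\sigma^\perp\psi\|^2$ as the dominant positive terms. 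The term $|f|^2 \|\nabla_\sigma v_{\kappa(\sigma),\hbar}\|^2$ contributes $R_\hbar(\sigma)|f|^2$ (and is nonnegative), while the mixed product $f \langle \nabla_\sigma f, \int v\,\nabla_\sigma v\,\widehat m\,d\tau\rangle$ is $\mathcal O(\hbar^2)$ by the same normalization argument used in the upper bound (see \eqref{eq.quasi-orth}); this yields the $(1 - C\hbar^2)$ prefactor in front of $\hbar^4\|\nabla_\sigma f\|^2$.

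\textbf{Step 4: The cross term and expected main obstacle.}
The technically hardest piece is the cross term $2\hbar^4 \Re\langle \nabla_\sigma \Pi_\sigma \psi, \nabla_\sigma \Pi_\sigma^\perp \psi\rangle$, since $\nabla_\sigma$ does not commute with $\Pi_\sigma$. I would apply Cauchy--Schwarz with a small parameter $\eta>0$ to split
\[
2\hbar^4 |\langle \nabla_\sigma \Pi_\sigma \psi, \nabla_\sigma \Pi_\sigma^\perp \psi\rangle| \leq \eta \hbar^4 \|\nabla_\sigma \Pi_\sigma^\perp \psi\|^2 + \eta^{-1} \hbar^4 \|\nabla_\sigma \Pi_\sigma \psi\|^2,
\]
and choose $\eta = C\hbar$. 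The first piece is absorbed into the $\hbar^4(1-C\hbar)\|\nabla_\sigma \Pi_\sigma^\perp \psi\|^2$ in the target; the second produces extra $\hbar^3 \|\nabla_\sigma f\|^2$ and $\hbar^3 R_\hbar(\sigma)|f|^2$, which are consistent with the $(1-C\hbar^2)$ and $-C\hbar^2 R_\hbar$ tolerances stated. The main obstacle is to book-keep all these perturbative contributions consistently so that the $\hbar$-powers in the final bound exactly match those in the statement, and in particular that the correction to $\lambda_2$ remains of order $\hbar^2 R_\hbar$ (not worse), which requires using $T=\hbar^{-1}$ in the orthogonal estimates of Lemmas \ref{lem.dtau} and \ref{lem.dsigma} as was done for the upper bound.
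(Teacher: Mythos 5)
Your Step 4 is where the argument breaks down. Writing $\nabla_\sigma\psi = \nabla_\sigma\Pi_\sigma\psi + \nabla_\sigma\Pi_\sigma^\perp\psi$ and expanding the square does produce the cross term you flag as the obstacle, but the Cauchy--Schwarz splitting you propose cannot be calibrated to close. You need a lower bound, so the cross term is controlled as
\[
2\hbar^4\Re\langle \nabla_\sigma\Pi_\sigma\psi,\nabla_\sigma\Pi_\sigma^\perp\psi\rangle \geq -\eta\,\hbar^4\|\nabla_\sigma\Pi_\sigma^\perp\psi\|^2 - \eta^{-1}\hbar^4\|\nabla_\sigma\Pi_\sigma\psi\|^2\,.
\]
To keep the $\Pi_\sigma^\perp$ gradient with coefficient $\hbar^4(1-C\hbar)$ you must take $\eta\lesssim\hbar$, which forces $\eta^{-1}\gtrsim\hbar^{-1}$; the second term then contributes $-c\,\hbar^{3}\|\nabla_\sigma f\|^2$ (plus $R_\hbar$-terms), and this swamps the $\hbar^4(1-C\hbar^2)\|\nabla_\sigma f\|^2$ you are trying to retain, giving a negative coefficient for small $\hbar$. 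No choice of $\eta$ resolves the conflict: matching the $(1-C\hbar^2)$ prefactor forces $\eta\gtrsim\hbar^{-2}$, which in turn makes the $\Pi_\sigma^\perp$ coefficient negative. Your claim that $\hbar^3\|\nabla_\sigma f\|^2$ is ``consistent with the $(1-C\hbar^2)$ tolerance'' is the error — that tolerance only allows errors of size $\hbar^6\|\nabla_\sigma f\|^2$.

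The paper sidesteps this entirely by decomposing the gradient \emph{after} applying the projection, i.e.\ using the exact orthogonality
\[
\|\nabla_\sigma\psi\|^2_{L^2(\widehat m\,\dx\tau)} = \|\Pi_\sigma\nabla_\sigma\psi\|^2_{L^2(\widehat m\,\dx\tau)} + \|\Pi_\sigma^\perp\nabla_\sigma\psi\|^2_{L^2(\widehat m\,\dx\tau)}\,,
\]
which has no cross term because $\Pi_\sigma$ is an orthogonal projection for each fixed $\sigma$. The commutation defect is then re-introduced \emph{inside} each summand: one writes $\Pi_\sigma\nabla_\sigma\psi = (\nabla_\sigma f)v_{\kappa(\sigma),\hbar} + f\nabla_\sigma v_{\kappa(\sigma),\hbar} + [\Pi_\sigma,\nabla_\sigma]\psi$, bounds the commutator by $\bigl(2R_\hbar^{1/2}+C\hbar^2\bigr)\|\psi\|$ via Agmon estimates, and applies $\|a+b\|^2\geq(1-\varepsilon)\|a\|^2-\varepsilon^{-1}\|b\|^2$ with $\varepsilon=\hbar^2$ to each piece separately (and analogously for $\Pi_\sigma^\perp\nabla_\sigma\psi$). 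The point is that the ``small'' term $b$ is genuinely of size $R_\hbar^{1/2}+\hbar^2$ times $\|\psi\|$, so $\varepsilon^{-1}\|b\|^2$ produces only $\hbar^{-2}(R_\hbar+\hbar^4)$-type corrections, which after multiplication by $\hbar^4$ land exactly in the allowed error budget. Your decomposition mixes the two projected components \emph{before} this smallness can be exploited, which is why the crude Cauchy--Schwarz cannot recover it. Steps 1--3 of your proposal match the paper's strategy.
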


\begin{proof}
The proof will be done in a few steps.
\begin{enumerate}[i.]
\item First, we use Lemma \ref{lem.approx} to write
\begin{multline}\label{eq.lb0}
 \widehat{\mathcal{Q}}_{\hbar}(\psi) \geq 
 \int_{\widehat{\mathcal V}_{T }}\hbar^4\|\nabla_{\sigma} \psi\|^2 \widehat m \dx\Gamma \dx\tau 
 + \int_\Gamma q_{\kappa(\sigma),\hbar} (\psi) \dx\Gamma \\
 - C \int_\Gamma \left( \hbar^6\|\nabla_\sigma f(\sigma)\|^2 + \hbar^4(1+ \hbar R_\hbar(\sigma))|f(\sigma)|^2 \right) \dx \Gamma \\
 - C \hbar^5 \int_{\widehat{\mathcal{V}}_{T }}\|\nabla_{\sigma}\Pi^\perp_{\sigma}\psi\|^2 \widehat m \dx\Gamma\dx\tau
 - C \hbar^2\int_{\widehat{\mathcal{V}}_{T }}|\partial_{\tau}\Pi^\perp_{\sigma}\psi|^2 \widehat m \dx\Gamma\dx\tau\,.
\end{multline}

\item 
On one hand, we get, by using an orthogonal decomposition, for each $\sigma\in\Gamma$,
\[q_{\kappa(\sigma),\hbar} (\psi)=q_{\kappa(\sigma),\hbar} (\Pi_\sigma \psi)+q_{\kappa(\sigma),\hbar} (\Pi^\perp_\sigma \psi)\,.\]
Then, we get, by using the min-max principle,
\begin{align}
 \nonumber&\int_\Gamma q_{\kappa(\sigma),\hbar} (\psi) \dx\Gamma
  - C \hbar^2\int_{\widehat{\mathcal{V}}_{T }}|\partial_{\tau}\Pi^\perp_{\sigma}\psi|^2 \widehat m \dx\Gamma\dx\tau \\
 \label{eq.lb1}&\geq  \int_\Gamma q_{\kappa(\sigma),\hbar} (\Pi_\sigma \psi) \dx\Gamma + (1-C \hbar^2)\int_\Gamma q_{\kappa(\sigma),\hbar} (\Pi_\sigma^\perp \psi) \dx\Gamma \\
 \nonumber&\geq \int_\Gamma \left( \lambda_1(\mathcal{H}_{\kappa(\sigma),\hbar}) |f(\sigma)|^2 + (1-C \hbar^2) \lambda_2(\mathcal{H}_{\kappa(\sigma),\hbar}) \|\Pi_\sigma^\perp \psi\|^2_{L^2( \widehat{m}\dx\tau)} \right) \dx\Gamma\,.
\end{align}
On the other hand, we also have
\begin{equation}\label{eq.lb2}
\|\nabla_{\sigma} \psi\|^2_{L^2(\widehat{m}\dx\tau)}
   =\|\Pi_\sigma \nabla_{\sigma} \psi\|_{L^2(\widehat{m}\dx\tau)}^2
   +\|\Pi_\sigma^\perp \nabla_{\sigma} \psi\|^2_{L^2(\widehat{m}\dx\tau)}\,.
\end{equation}
\item Then, we estimate the commutator:
\begin{multline*}
 \left[ \nabla_\sigma , \Pi_\sigma \right] \psi 
  = \langle\psi, \nabla_\sigma v_{\kappa(\sigma), \hbar}\rangle_{L^2( \widehat{m} \dx\tau)} v_{\kappa(\sigma), \hbar}
  + \langle\psi, v_{\kappa(\sigma), \hbar}\rangle_{L^2( \widehat{m})\dx\tau} \nabla_\sigma v_{\kappa(\sigma), \hbar} \\
  - \hbar^2 \nabla_\sigma \kappa(\sigma) \left( \int_0^T \psi v_{\kappa(\sigma), \hbar} \tau \dx \tau \right) v_{\kappa(\sigma), \hbar}\,.
\end{multline*}
We get, thanks to the Cauchy-Schwarz inequality and Agmon estimates (see Proposition \ref{prop.AgmonuBT}),
\begin{equation}\label{eq.comestim}
 \left\|\left[ \Pi_\sigma, \nabla_\sigma \right] \psi \right\|_{L^2( \widehat{m} \dx\tau)}
 \leq \left( 2 R_\hbar(\sigma)^{\frac{1}{2}} +C \hbar^2 \right) \|\psi\|_{L^2( \widehat{m} \dx\tau)}\,.
\end{equation}
Then, we write 
\begin{equation}\label{eq.commut}
\Pi_\sigma \nabla_{\sigma} \psi = \nabla_\sigma f(\sigma) v_{\kappa(\sigma), \hbar}
  + f(\sigma) \nabla_\sigma v_{\kappa(\sigma),\hbar} 
  + \left[ \Pi_\sigma, \nabla_\sigma \right] \psi \,.\
\end{equation}
Let us recall the following classical inequality:
\[\forall \, a,b \in \C^{n-1},  \forall \, \varepsilon \in (0,1), \, \|a+b\|^2 \geq (1-\varepsilon)\|a\|^2 - \varepsilon^{-1}\|b\|^2\,.\]
We take $\varepsilon=\hbar^2$, $a= \nabla_\sigma f(\sigma) v_{\kappa(\sigma), \hbar}$ and $b= f(\sigma) \nabla_\sigma v_{\kappa(\sigma),\hbar} + \left[ \Pi_\sigma, \nabla_\sigma \right] \psi $. We get, from  \eqref{eq.comestim} and \eqref{eq.commut},
\begin{multline}\label{eq.Pinab}
 \int_0^T\|\Pi_\sigma\nabla_{\sigma} \psi\|^2  \widehat{m} \dx\tau
 \geq (1-\hbar^2) \|\nabla_\sigma f(\sigma)\|^2 \\
 -C\hbar^{-2} \big(R_\hbar(\sigma)+\mathcal{O}(\hbar^4)\big) \big(|f(\sigma)|^2 + \| \Pi_\sigma^\perp \psi \|^2_{L^2(\widehat{m}\dx\tau)} \big)\,.
\end{multline}
In the same way, we get 
\begin{multline}\label{eq.Pipnab}
 \int_0^T\|\Pi_\sigma^\perp\nabla_{\sigma} \psi\|^2  \widehat{m} \dx\tau
 \geq (1-\hbar^2) \|\nabla_\sigma \Pi_\sigma^\perp \psi \|_{L^2(\widehat{m}\dx\tau)}^2 \\
 -C\hbar^{-2} \big(R_\hbar(\sigma)+\mathcal{O}(\hbar^4)\big) \big(|f(\sigma)|^2 + \| \Pi_\sigma^\perp \psi \|^2_{L^2(\widehat{m}\dx\tau)} \big)\,. 
\end{multline} 
\item Now we use \eqref{eq.lb0}, \eqref{eq.lb1}, \eqref{eq.lb2} and the estimates \eqref{eq.Pinab}, \eqref{eq.Pipnab} and the conclusion follows.
\end{enumerate}
\end{proof}

\subsection{Derivation of the effective Hamiltonians}
We can now end the proof of Theorem \ref{theo.eff}.
\begin{enumerate}[i.]

\item We apply Proposition \ref{lem:H0b;l} to get
\[\lambda_1(\mathcal{H}_{\kappa(\sigma),\hbar})=-1- \kappa(\sigma)\h^2+\mathcal{O}(\h^4)\,,\]
and we use Lemmas \ref{lem:1DL}, \ref{lem:H0b} to deduce that there exist positive constants $\h_0$ and $C$ such that, for all $\h \in (0,\h_0)$,
\[\lambda_2(\mathcal{H}_{\kappa(\sigma),\hbar})\geq-C\h\geq-\frac{\eps_0}{2}\,.\] 
Then we notice, thanks to Lemma \ref{lem.h12}, that the Born-Oppenheimer correction satisfies $R_\h(\sigma)=\mathcal{O}(\h^4)$.

\item As a consequence of Proposition \ref{upper_bound}, there exists $C_+>0$ such that, for all $\psi\in\widehat{\mathcal{D}}_T$ and $\h$ small enough, 
\[\widehat{\mathcal{Q}}_{\hbar}(\Pi_{\sigma} \psi) \leq \widehat{\mathcal{Q}}^{\mathsf{eff},+}_\h(f)\,,\]
where, for all $f \in H^1(\Gamma)\,,$
\begin{equation*}
 \widehat{\mathcal{Q}}^{\mathsf{eff},+}_\h(f)
 =\int_\Gamma \left( \hbar^4(1+C_+\hbar^2)\|\nabla_\sigma f\|^2 
 + \left( -1-\kappa(\sigma)\h^2 +C_+\hbar^4 \right)|f|^2 \right)\dx \Gamma\,.
\end{equation*}
For $n\geq1$, let
\[G_{n,\h}=\left\{fv_{\kappa(\sigma),\h} \in \widehat{\mathcal{D}}_T\,:\,f\in F_{n,\h}\right\}\,,\]
 where $F_{n,\h}$ is the subspace of $H^1(\Gamma)$ spanned by the eigenvalues $\left(\widehat\mu_k^{\mathsf{eff},+}(\h)\right)_{1\leq k\leq n}$ of the associated operator $\widehat{\mathcal{L}}^{\mathsf{eff},+}_\h$. We have $\dim G_{n,\h}=n$ and, for all $\psi\in G_{n,\h}$,
 \[\widehat{\mathcal{Q}}_{\hbar}(\psi) \leq \widehat\mu_n^{\mathsf{eff},+}(\h)\|\psi\|^2_{L^2(\widehat a \dx\Gamma\dx\tau)}\,,\]
so that, by application of the min-max principle,
\[\widehat\mu_n(\h)\leq \widehat\mu_n^{\mathsf{eff},+}(\h)\,.\]

\item For $\eps_0 \in (0,1)$, thanks to Proposition \ref{lower_bound}, there exists $C_->0$ such that, for all $\psi \in\widehat{\mathcal{D}}_T$ and $\h$ small enough, 
\[\widehat{\mathcal{Q}}_{\h}(\psi) \geq \widehat{\mathcal{Q}}^{\mathsf{eff},-}_\h(f)-\frac{\eps_0}{2}\|\Pi_\sigma^\perp\psi\|^2_{L^2(\widehat m\dx\Gamma\dx\tau)}\,,\]
where, for all $f \in H^1(\Gamma)$,
\begin{equation*}
 \widehat{\mathcal{Q}}^{\mathsf{eff},-}_\h(f)
 =\int_\Gamma \left( \hbar^4(1-C_-\h^2)\|\nabla_\sigma f\|^2 
 + \left( -1-\kappa(\sigma)\h^2 -C_-\hbar^4 \right)|f|^2 \right)\dx \Gamma\,.
\end{equation*}
We consider the quadratic form defined, for $(f,\varphi)\in H^1(\Gamma)\times\widehat V_T$, by
\[\widehat{\mathcal{Q}}_\h^{\mathsf{tens}}(f,\varphi)=\widehat{\mathcal{Q}}^{\mathsf{eff},-}_\h(f)-\frac{\eps_0}{2}\|\varphi\|^2_{L^2(\widehat m\dx\Gamma\dx\tau)}\,.\]
By application of the min-max principle (see also \cite[Chapter 13]{Ray}), we have the comparison of the Rayleigh quotients:
\[\widehat\mu_n(\h)\geq \widehat\mu_n^{\mathsf{tens}}(\h)\,.\]
Note that the spectrum of $\widehat{\mathcal{L}}^\mathsf{tens}_{\h}$ lying below $-\eps_{0}$ is discrete and coincides with the spectrum of $\widehat{\mathcal{L}}_\h^{\mathsf{eff},-}$.
Then, for all $n\in\mathcal{N}_{\eps_{0}, h}$, $\widehat\mu_n^{\mathsf{tens}}(\h)$ is the $n$-th eigenvalue of $\widehat{\mathcal{L}}^\mathsf{tens}_{\h}$ and its satisfies $\widehat\mu_n^{\mathsf{tens}}(\h)=\widehat{\mu}^{\mathsf{eff},-}_{n}(\h)$.
\end{enumerate}

\section{Asymptotic counting formula for the non positive eigenvalues}\label{sec.neg}
This section is devoted to the proof of Theorem \ref{theo.Weyl2}. For that purpose we prove an upper bound in Proposition \ref{prop.ub} and a lower bound in Proposition \ref{prop.lb}.
\begin{proposition}\label{prop.ub}
There exist $C, h_{0}>0$ such that for all $h\in(0,h_{0})$,
\[\mathsf{N}\left(\mathcal{L}_{h},0\right)\leq (1+o(1))\mathsf{N}\left(h\mathcal{L}^\Gamma, 1\right)\,.\]
\end{proposition}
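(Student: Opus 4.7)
The plan is to combine a rough IMS-type partition of unity (separating the interior from a thin boundary layer) with a separation of variables in tubular coordinates, then invoke the standard Weyl law on $\Gamma$ cited in \eqref{eq.Weyl-stand}. First, I would fix an exponent $\rho\in\bigl(0,\tfrac{d-1}{2d}\bigr)$, set $\delta=h^{\rho}$, and pick a smooth quadratic partition $\chi_1^2+\chi_2^2\equiv 1$ on $\overline{\Omega}$ with $\supp\chi_1\subset\overline{\mathcal V_{2\delta}}$, $\supp\chi_2\subset\Omega\setminus\mathcal V_{\delta}$ and $|\nabla\chi_j|\leq Ch^{-\rho}$. The IMS localization identity then yields
\begin{equation*}
\mathcal Q_h(u)\geq \widetilde{\mathcal Q}_h^{\{2\delta\}}(\widetilde{\chi_1 u})+h^2\|\nabla(\chi_2 u)\|^2-Ch^{2-2\rho}\|u\|^2,
\end{equation*}
so by the min-max principle $\mathsf N(\mathcal L_h,0)\leq \mathsf N\bigl(\widetilde{\mathcal L}_h^{\{2\delta\}},Ch^{2-2\rho}\bigr)+\mathsf N\bigl(-h^2\Delta^{\mathsf{Dir}}_{\Omega},Ch^{2-2\rho}\bigr)$. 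The interior Dirichlet term is $O(h^{-d\rho})=o(h^{-(d-1)/2})$ by the classical Weyl law and the choice of $\rho$, hence negligible compared with $\mathsf N(h\mathcal L^\Gamma,1)$.

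For the boundary piece I would pass to the tubular coordinates of Section~\ref{sec.Lbnd} and use $|\tilde a-1|+\|\tilde g^{-1}-g^{-1}\|=O(t)$ together with $t\leq 2h^{\rho}$ on $\supp \widetilde{\chi_1 u}$ to obtain
\begin{equation*}
\widetilde{\mathcal Q}_h^{\{2\delta\}}(v)\geq (1-Ch^{\rho})\,\mathcal Q_h^{\mathsf{sep}}(v),
\end{equation*}
where
\[\mathcal Q_h^{\mathsf{sep}}(v)=\int_{\Gamma\times(0,2\delta)}\!\bigl(h^2|\partial_t v|^2+h^2\langle g^{-1}\nabla_s v,\nabla_s v\rangle\bigr)\dx\Gamma\,\dx t-h^{3/2}\int_\Gamma|v(\cdot,0)|^2\,\dx\Gamma,\]
the residual boundary discrepancy of size $h^{\rho+3/2}\int_\Gamma|v|^2$ being absorbed into the Robin term up to a $(1+o(1))$ factor. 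The associated self-adjoint operator is the tensor sum $\mathcal L_h^{\mathsf{sep}}=\mathcal T_h\otimes\mathsf{Id}+\mathsf{Id}\otimes h^2\mathcal L^\Gamma$, where $\mathcal T_h=-h^2\partial_t^2$ on $(0,2\delta)$ carries Robin at $t=0$ and Dirichlet at $t=2\delta$.

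The rescaling $\tau=t/h^{1/2}$ identifies $\mathcal T_h$ with $h(-\partial_\tau^2)$ on $(0,2h^{\rho-1/2})$ with Robin at the origin; since $h^{\rho-1/2}\to+\infty$, the ground state satisfies $\alpha_1(h)=-h+O\bigl(h\,e^{-ch^{\rho-1/2}}\bigr)$ while $\alpha_k(h)\geq c_k h^{2-2\rho}$ for $k\geq 2$. Only finitely many such $k$ lie below $Ch^{2-2\rho}$, so summing band contributions,
\begin{align*}
\mathsf N\bigl(\widetilde{\mathcal L}_h^{\{2\delta\}},Ch^{2-2\rho}\bigr)
&\leq \mathsf N\bigl(\mathcal L_h^{\mathsf{sep}},(1-Ch^{\rho})^{-1}Ch^{2-2\rho}\bigr)\\
&\leq \mathsf N\bigl(h\mathcal L^\Gamma,1+o(1)\bigr)+O(1)\cdot\mathsf N\bigl(h\mathcal L^\Gamma,Ch^{1-2\rho}\bigr)\\
&=(1+o(1))\,\mathsf N\bigl(h\mathcal L^\Gamma,1\bigr),
\end{align*}
where the first summand invokes the stability of \eqref{eq.Weyl-stand} under $o(1)$ perturbations and the second is $O(h^{-\rho(d-1)})=o(h^{-(d-1)/2})$ because $\rho<1/2$.

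The main obstacle will be the simultaneous calibration of $\rho$: the boundary layer must be thin enough that the metric distortion $O(h^{\rho})$ produces only a $(1+o(1))$ multiplicative factor on the Weyl count, yet wide enough that the transverse excitation gap $\sim h^{2-2\rho}$ dominates the partition gradient error $Ch^{2-2\rho}$, while the Weyl exponent $-d\rho$ of the interior Dirichlet count must still beat $-(d-1)/2$. All four constraints are satisfied exactly in the window $\rho\in\bigl(0,\tfrac{d-1}{2d}\bigr)$, whose nonemptiness requires $d\geq 2$, matching the hypothesis of the paper.
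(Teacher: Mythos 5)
Your proof follows the paper's strategy exactly: an IMS partition isolating a boundary layer of thickness $\sim h^{\rho}$, reduction to a tensor-product model $\widetilde{\mathcal{L}}_h^{\mathsf{tens}}$ via a rough Taylor expansion of the metric, a transverse one-dimensional spectral analysis through Lemma~\ref{lem:1DL*}, and the classical Weyl law on $\Gamma$, with the interior Dirichlet piece controlled by the usual Weyl estimate. One detail is handled more carefully (and more accurately) in your version: after the rescaling $\tau=t/h^{1/2}$ the transverse operator is $h\,\mathcal H_0^{\{T\}}$ with $T=h^{\rho-1/2}$, so Lemma~\ref{lem:1DL*} gives the $k$-th transverse eigenvalue of order $h^{2-2\rho}$ for $k\geq 2$, which is comparable to the threshold $Ch^{2-2\rho}$ (the paper quotes $h^{2\rho}$ here); your explicit band-by-band count, showing that the finitely many excited transverse bands each contribute $O\bigl(\mathsf N(h\mathcal L^\Gamma,Ch^{1-2\rho})\bigr)=O\bigl(h^{-\rho(d-1)}\bigr)=o\bigl(h^{-(d-1)/2}\bigr)$ for $\rho<\tfrac12$, closes this small gap, and your explicit window $\rho\in\bigl(0,\tfrac{d-1}{2d}\bigr)$ makes precise the paper's ``taking $\rho$ small enough.''
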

\begin{proof}
Consider a quadratic partition of the unity $(\chi_{j,h})_{j=1,2}$ in $\overline{\Omega}$ satisfying
\[\sum_{j=1}^2\chi_{j,h}^2=1\,,\quad \sum_{j=1}^2|\nabla\chi_{j,h}|^2\leq C h^{-2\rho}\,,\]
and
\[{\supp}\chi_{1,T}\subset \{{\dist}(x,\partial\Omega)< h^{\rho}\}\,.\]
For all $u\in H^1(\Omega)$, the following classical localization formula holds
\begin{equation}\label{eq:partition}
\begin{aligned}
\mathcal Q_h(u)&=\mathcal Q_h(\chi_{1,h}u)+\mathcal Q_h(\chi_{2,h}u)-h^2\sum_{j=1}^2\big\|u\nabla\chi_{j,h}\|^2 \\
&\geq \mathcal Q_h(\chi_{1,h}u)+\mathcal Q_h(\chi_{2,h}u)-Ch^{2-2\rho}\|u\|^2\,.
\end{aligned}
\end{equation}
Now, we estimate $\mathcal Q_h(\chi_{1,h}u)$ by using the boundary coordinates (see Section \ref{sec.Lbnd} and especially \eqref{eq:red-bnd}) and a rough Taylor expansion of the metrics:
\[\mathcal Q_h(\chi_{1,h}u)\geq (1-Ch^{\rho})\widetilde{\mathcal{Q}}_{h}^\mathsf{\mathsf{tens}}(\widetilde{\chi_{1,h}u})\,,\]
where
\[\widetilde{\mathcal{Q}}^\mathsf{tens}_{h}(v)=\int_{\widetilde{\mathcal V_\delta}}\Big(h^2\langle\nabla_{s} v,\nabla_{s} v\rangle+|h\partial_t v|^2\Big)\dx \Gamma\dx t-h^{\frac 32}\int_{\Gamma} |v(s,0)|^2 \dx \Gamma\,.\]
We deduce that
\[\mathcal Q_h(u)\geq  (1-Ch^{\rho})\widetilde{\mathcal{Q}}_{h}^\mathsf{\mathsf{tens}}(\widetilde{\chi_{1,h}u})+\mathcal Q_h(\chi_{2,h}u)-Ch^{2-2\rho}\|u\|^2\,.\]
Then, thanks to the min-max principle (see \cite{CdV}), we get:
\begin{equation}\label{eq.N+N}
\mathsf{N}\left(\mathcal{L}_{h},0\right)\leq \mathsf{N}\left(\widetilde{\mathcal{L}}^\mathsf{tens}_{h}, Ch^{2-2\rho}\right)+\mathsf{N}\left(-h^2\Delta^\mathsf{Dir}, Ch^{2-2\rho}\right)\,.
\end{equation}
Then, by using the usual Weyl formula for the Dirichlet Laplacian, we get
\begin{equation}\label{eq.Ndir}
\mathsf{N}\left(-h^2\Delta^\mathsf{Dir}, Ch^{2-2\rho}\right)\leq C h^{-d\rho}\,,
\end{equation}
and it remains to analyze $\mathsf{N}\left(\widetilde{\mathcal{L}}^\mathsf{tens}_{h}, Ch^{2-2\rho}\right)$. The operator $\widetilde{\mathcal{L}}^\mathsf{tens}_{h}$ is in a tensorial form and it has a Hilbertian decomposition by using the Hilbertian basis of the eigenfunctions of the transverse  Robin Laplacian. Let us describe the spectrum of the transverse operator and show that only its first eigenvalue contributes to the spectrum of $\widetilde{\mathcal{L}}^\mathsf{tens}_{h}$ below $Ch^{2-2\rho}$.

We know that the second eigenvalue of $h^2D^2_{t}$, acting on $L^2((0,h^{\rho}),\dx t)$, with Robin condition at $0$ and Dirichlet condition at $h^{\rho}$ is of order $h^{2\rho}$ (see Lemma \ref{lem:1DL*}, with $T=h^{\rho-\frac{1}{2}}$). Since $\rho\in\left(0,\frac{1}{2}\right)$, we get $h^{2\rho}\gg h^{2-2\rho}$ and thus we have only to consider the first transverse eigenvalue whose asymptotic expansion is $-h+\mathcal{O}(h^\infty)$. We get
\begin{equation}\label{eq.NL}
\mathsf{N}\left(\widetilde{\mathcal{L}}^\mathsf{tens}_{h}, Ch^{2-2\rho}\right)\leq \mathsf{N}\left(h\mathcal{L}^\Gamma, 1+\tilde Ch^{1-2\rho}\right)\underset{h\to 0}{\sim} \mathsf{N}\left(h\mathcal{L}^\Gamma, 1\right)\,.
\end{equation}
We deduce the upper bound by combining \eqref{eq.N+N}, \eqref{eq.Ndir}, \eqref{eq.NL}, \eqref{eq.Weyl-stand} and taking $\rho$ small enough.
\end{proof}
\begin{proposition}\label{prop.lb}
There exist $C, h_{0}>0$ such that for all $h\in(0,h_{0})$,
\[\mathsf{N}\left(\mathcal{L}_{h},0\right)\geq (1+o(1))\mathsf{N}\left(h\mathcal{L}^\Gamma, 1\right)\,.\]
\end{proposition}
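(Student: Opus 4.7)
The plan mirrors, in reverse, the upper bound of Proposition~\ref{prop.ub}: I will exhibit a subspace of $H^1(\Omega)$ of dimension $N_h=(1+o(1))\mathsf{N}(h\mathcal{L}^\Gamma,1)$ on which the quadratic form $\mathcal{Q}_h$ is non-positive; by the min-max principle this gives $\mu_{N_h}(h)\le 0$, and hence $\mathsf{N}(\mathcal{L}_h,0)\ge N_h$. The trial functions will be tensor products $u(s,t)=\phi(s)\psi_h(t)$ in boundary coordinates, with $\phi$ a low-lying eigenfunction of $\mathcal{L}^\Gamma$ and $\psi_h$ the one-dimensional transverse Robin ground state.

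To set this up, I work in the tube $\mathcal V_\delta$ with $\delta=h^\rho$ for some small $\rho\in(0,\tfrac{1}{2})$, restricting to functions in the space $V_\delta$ that vanish at $t=\delta$; extended by zero, they belong to $H^1(\Omega)$ and their energies coincide with the tubular quadratic form $\widetilde{\mathcal{Q}}_h^{\{\delta\}}$ of~\eqref{eq:red-bnd}. On the thin tube, the Taylor expansions $\tilde a=1+\mathcal{O}(h^\rho)$ and $\tilde g^{-1}=\mathsf{Id}+\mathcal{O}(h^\rho)$ combine to give
\[
\widetilde{\mathcal{Q}}_h^{\{\delta\}}(u)\le (1+Ch^\rho)\,\widetilde{\mathcal{Q}}_h^{\mathsf{tens}}(u)+Ch^{3/2+\rho}\int_\Gamma|u(s,0)|^2\dx\Gamma,
\]
where $\widetilde{\mathcal{Q}}_h^{\mathsf{tens}}$ is the separable flat-metric form already used in Proposition~\ref{prop.ub}. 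Let $\psi_h$ be the $L^2$-normalized ground state of the 1D transverse operator $-h^2\partial_t^2$ on $(0,h^\rho)$ with Robin condition at $t=0$ and Dirichlet at $t=h^\rho$; the same 1D analysis invoked in Proposition~\ref{prop.ub} yields the eigenvalue $-h+\mathcal{O}(h^\infty)$ and the trace bound $|\psi_h(0)|^2=\mathcal{O}(h^{-1/2})$. Inserting $u=\phi(s)\psi_h(t)$ and decomposing $\phi=\sum_k a_k\phi_k$ along the orthonormal eigenbasis of $\mathcal{L}^\Gamma$ with eigenvalues $\lambda_k^\Gamma$ produces
\[
\widetilde{\mathcal{Q}}_h^{\{\delta\}}(\phi\psi_h)\le\sum_k|a_k|^2\Bigl[(1+Ch^\rho)(h^2\lambda_k^\Gamma-h)+C'h^{1+\rho}\Bigr]+\mathcal{O}(h^\infty)\|\phi\|^2_{L^2(\Gamma)}.
\]

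Now pick $C_0$ large enough that each bracket is non-positive whenever $h\lambda_k^\Gamma\le 1-C_0 h^\rho$. The span of the corresponding $\phi_k\otimes\psi_h$ is then a subspace of $H^1(\Omega)$ on which $\mathcal{Q}_h\le 0$, of dimension $N_h=\mathsf{N}(h\mathcal{L}^\Gamma,1-C_0 h^\rho)$. The classical Weyl estimate~\eqref{eq.Weyl-stand}, which is stable under $o(1)$ perturbations of the energy threshold (as recorded in the remark following Theorem~\ref{theo.Weyl2}), gives $N_h\sim\mathsf{N}(h\mathcal{L}^\Gamma,1)$, and the min-max principle concludes. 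The main bookkeeping obstacle is the first display: the combined effect of the geometric corrections in $\tilde a,\tilde g^{-1}$ and the non-normalization $|\psi_h(0)|^2\sim h^{-1/2}$ produces an absolute error $Ch^{1+\rho}$ on the tensorial energy, which is $o(h)$; it is precisely this smallness that translates into an $o(1)$ shift of the boundary cutoff and leaves the Weyl asymptotic intact.
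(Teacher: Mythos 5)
Your proof is correct and essentially reproduces the paper's argument: tensorized test functions $\phi_k(s)\psi_h(t)$ in the thin tube of width $h^\rho$, a rough Taylor expansion of the metric, and stability of the boundary Weyl count under $o(1)$ shifts of the threshold. You are in fact slightly more careful than the paper about the boundary trace term, since $\tilde a(s,0)=1$ makes it exact and your explicit $Ch^{3/2+\rho}\int_\Gamma|u(s,0)|^2\dx\Gamma$ correction combined with $|\psi_h(0)|^2=\mathcal{O}(h^{-1/2})$ is precisely what quantifies the $\mathcal{O}(h^\rho)$ shift of the energy cutoff that the paper uses implicitly.
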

\begin{proof}
To find the lower bound, we just have to bound the quadratic form $\mathcal{Q}_{h}$ on an appropriate subspace. We consider $\rho\in\left(0,\frac{1}{2}\right)$. We first notice that, for $u$ such that $\supp u\subset \widetilde{\mathcal V}_{h^\rho}$,
\[\mathcal Q_h(u)\leq (1+Ch^{\rho})\widetilde{\mathcal{Q}}_{h}^\mathsf{\mathsf{tens}}(\widetilde{u})\,.\]
We apply this inequality to the space spanned by functions in the form $\widetilde{u}(s,t)=f_{h, n}(s)u_{h}(t)$ where the $f_{h,n}$ are the eigenfunctions of $h^2\mathcal{L}^\Gamma+\lambda(h)$ associated with non positive eigenvalues and $u_{h}$ is the first eigenfunction of the transverse Robin Laplacian with eigenvalue $\lambda(h)=-h+\mathcal{O}(h^{\infty})$. The conclusion again follows from the min-max principle and the fact that  $\mathsf{N}\left(h\mathcal{L}^\Gamma, 1+\mathcal{O}(h^\infty)\right)\underset{h\to 0}{\sim}\mathsf{N}\left(h\mathcal{L}^\Gamma, 1\right)$.

\end{proof}

\appendix
\section{Reminders about Robin Laplacians in one dimension}
The aim of this section is to recall a few spectral properties related to the Robin Laplacian in dimension one. Most of them have been established in \cite{HK-tams} or \cite{HKR15}.
\subsection{On a half line}
As simplest model, we start with the operator, acting on $L^2(\R_+)$, defined by
\begin{equation}\label{defH00}
\mathcal H_{0}=-\partial^2_{\tau}
\end{equation}
with domain
\begin{equation}
\Dom(\mathcal H_{0})=\{u\in H^2(\R_+)~:\,u'(0)=- u(0)\}\,.
\end{equation}
Note that this operator is associated with the quadratic form
\[
V_0 \ni u\mapsto \int_0^{+\infty} |u'(\tau)|^2\,d\tau\,  - |u (0)|^2\,,
\]
with $V_0 = H^1(0,+\infty)\,$.

The spectrum of this operator is
$\{-1\}\cup[0,\infty)$.
The eigenspace of the eigenvalue $-1$ is generated by the $L^2$-normalized function
\begin{equation}\label{eq:u0}
u_0(\tau)=\sqrt{2}\,\exp\left(-\tau\right)\,.
\end{equation}
We will also consider this operator in a bounded interval $(0,T)$ with $T$ sufficiently large and Dirichlet condition at $\tau=T$.

\subsection{On an interval}
Let us consider $T\geq 1$ and the self-adjoint operator acting on $ L^2(0,T)$ and defined by
\begin{equation}\label{eq:H0}
\mathcal H^{\{T\}}_{0}=-\partial^2_{\tau}\,,
\end{equation}
with domain,
\begin{equation}\label{eq:DomH0}
\Dom(\mathcal H^{\{T\}}_{0})=\{u\in H^2(0,T)~:~u'(0)=-u(0)\quad{\rm and}\quad u(T)=0\}\,.
\end{equation}
The spectrum of the operator $\mathcal H^{\{T\}}_{0}$ is purely discrete
and consists of a strictly increasing sequence of eigenvalues
denoted by $\left(\lambda_n\left(\mathcal H^{\{T\}}_{0}\right)\right)_{n\geq 1}$. This operator is associated with the quadratic form
\[
V^{\{T\}}_0\ni u\mapsto \int_0^{T} |u'(\tau)|^2\,d\tau\,  - |u (0)|^2\,,
\]
with $V^{\{T\}}_0 =\{v\in H^1(0,T)\,|\, v(T)=0\}$.\\
The next lemma gives the  localization of  the two first eigenvalues
$\lambda_1\left(\mathcal H^{\{T\}}_{0}\right)$ and $\lambda_2\left(\mathcal H^{\{T\}}_{0}\right)$ for large values of $T$.

\begin{lemma}\label{lem:1DL}
As $T \to +\infty$, there holds
\begin{equation}\label{eq:lwh}
\lambda_1(\mathcal H^{\{T\}}_{0})= - 1 + 4 \big(1+o(1)\big) \exp\big( - 2 T\big)\quad{\rm
and}\quad \lambda_2(\mathcal H^{\{T\}}_{0})\geq 0\,.
\end{equation}
Let us now discuss the estimates of the next eigenvalues.
\end{lemma}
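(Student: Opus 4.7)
The plan is to diagonalize $\mathcal H^{\{T\}}_0$ by solving the ODE explicitly. For a candidate eigenvalue $\lambda = -k^2$ with $k > 0$, the general solution of $-u'' = -k^2 u$ is $u(\tau) = A e^{k\tau} + B e^{-k\tau}$. Imposing the Robin condition $u'(0) + u(0) = 0$ and the Dirichlet condition $u(T) = 0$ and requiring the associated $2\times 2$ linear system to admit a nontrivial solution produces the transcendental equation
\[
\frac{1-k}{1+k} = e^{-2kT}.
\]

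Next I would analyze its roots. For $k \geq 1$, the left-hand side is non-positive while the right-hand side is strictly positive, so no solution exists there. On $(0,1)$, I would study $g(k) := \ln\bigl((1-k)/(1+k)\bigr) + 2kT$: one checks $g(0) = 0$, $g'(0) = 2(T-1) > 0$ for $T$ large, and $g''(k) = -4k/(1-k^2)^2 < 0$, so $g$ is strictly concave on $(0,1)$ with $g(k) \to -\infty$ as $k \to 1^-$. Therefore $g$ vanishes at exactly one point $k_T \in (0,1)$, giving a unique negative eigenvalue. The second statement $\lambda_2(\mathcal H^{\{T\}}_0) \geq 0$ follows immediately, after noting that $\lambda = 0$ is not an eigenvalue (an affine eigenfunction $a\tau+b$ forces $a+b=0$ and $aT+b=0$, hence $a=b=0$ when $T\neq 1$).

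To pin down the asymptotics of $\lambda_1$, set $k_T = 1 - \eps_T$; a comparison argument in the equation shows $\eps_T \to 0^+$ as $T \to \infty$. The equation rewrites as $\eps_T/(2 - \eps_T) = e^{-2(1 - \eps_T)T}$, and a one-step bootstrap yields $\eps_T = 2 e^{-2T}(1 + o(1))$. Consequently
\[
\lambda_1(\mathcal H^{\{T\}}_0) = -k_T^2 = -1 + 2\eps_T - \eps_T^2 = -1 + 4\bigl(1+o(1)\bigr) e^{-2T}.
\]

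The only potential obstacle is bookkeeping---verifying uniqueness of $k_T$ cleanly and performing the bootstrap carefully. There is no genuine analytical difficulty, since everything reduces to an elementary constant-coefficient one-dimensional ODE with explicit boundary data.
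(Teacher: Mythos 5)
Your proof is correct. The paper does not actually supply a proof of this lemma (it defers to \cite{HK-tams} and \cite{HKR15}), but your approach is the natural one and is the same in spirit as the paper's own proof of the companion Lemma~\ref{lem:1DL*}: write down the explicit solutions of the constant-coefficient ODE, impose the Robin condition at $0$ and the Dirichlet condition at $T$, and reduce to a scalar transcendental equation. Your derivation of $(1-k)/(1+k)=e^{-2kT}$ is correct; so is the concavity argument for uniqueness of the root $k_T\in(0,1)$ and the observation that no root exists with $k\geq1$, nor with $\lambda=0$ when $T\neq 1$, which together give $\lambda_2\geq 0$. The two places you elide---that $\eps_T\to 0^+$ and that the bootstrap is justified---are routine but worth a line each: from $\eps_T/(2-\eps_T)=e^{-2(1-\eps_T)T}$ one gets $\eps_T\leq 2e^{-T}$ once $\eps_T\leq\tfrac12$, whence $\eps_T T\to 0$, $e^{2\eps_T T}\to 1$, and the stated asymptotics $\eps_T=2e^{-2T}(1+o(1))$ and $\lambda_1=-k_T^2=-1+4(1+o(1))e^{-2T}$ follow. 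No gap.
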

\begin{lemma}\label{lem:1DL*}
For all $T>1$ and $n\geq 2$, 
\[\left(\frac{(2n-3)\pi}{2T}\right)^2<\lambda_n(\mathcal H_{0}^{\{T\}})<\left(\frac{(n-1)\pi}{T}\right)^2\,.\]
\end{lemma}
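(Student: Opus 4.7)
The plan is to analyze the ODE eigenvalue problem $-u''=\lambda u$ on $(0,T)$ with $u'(0)+u(0)=0$ and $u(T)=0$ explicitly, reducing the positive spectrum to roots of a transcendental equation and locating those roots by a geometric argument on the graph of $\tan$.

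By Lemma~\ref{lem:1DL}, $\lambda_1<0$ and $\lambda_2\geq 0$, so every $\lambda_n$ with $n\geq 2$ is non-negative. When $T>1$ the value $\lambda=0$ is excluded: any solution of $u''=0$ satisfying $u'(0)+u(0)=0$ has the form $u(\tau)=\alpha(1-\tau)$, and imposing $u(T)=0$ forces $\alpha=0$. Hence $\lambda_n>0$ for $n\geq 2$, and I may write $\lambda_n=k_n^2$ with $k_n>0$. Now the general solution of $-u''=k^2u$ is $u(\tau)=A\cos(k\tau)+B\sin(k\tau)$; the Robin condition at $0$ gives $A=-Bk$, so up to scaling $u(\tau)=\sin(k\tau)-k\cos(k\tau)$, and $u(T)=0$ reduces to the characteristic equation $\tan(kT)=k$ (the alternative $\cos(kT)=0$ would also force $\sin(kT)=0$, which is impossible). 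Thus positive eigenvalues correspond bijectively to positive solutions of $\tan(kT)=k$.

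Next I would count and locate these roots. The map $k\mapsto \tan(kT)$ has simple poles at $k_m^{\ast}=(2m-1)\pi/(2T)$ for $m\geq 1$, and is strictly increasing from $-\infty$ to $+\infty$ on each interval $I_m=(k_m^{\ast},k_{m+1}^{\ast})$. On the initial interval $(0,k_1^{\ast})$, the function $k\mapsto \tan(kT)-k$ vanishes at the origin with derivative $T-1>0$ (this is where $T>1$ enters) and is convex, hence strictly positive there: no root. On each $I_m$ with $m\geq 1$, the intermediate value theorem and strict monotonicity yield exactly one root, which by ordering is $k_{m+1}$. This confinement $k_n\in I_{n-1}=((2n-3)\pi/(2T),(2n-1)\pi/(2T))$ immediately gives the stated lower bound $\lambda_n>((2n-3)\pi/(2T))^2$.

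The upper bound is the more delicate step, and I expect it to be the main obstacle. The strategy is to evaluate $\tan(kT)-k$ at a carefully chosen interior point of $I_{n-1}$ and, using the strict increase of this function across $I_{n-1}$, to place $k_n$ on the appropriate side of the test point. Selecting the right value and checking its sign from the geometry of $\tan$ on the relevant half-period—together with the hypothesis $T>1$—should produce the announced inequality; the key technical point will be to express this sign transparently in terms of $(n-1)\pi/T$ and the local behavior of $\tan$ near the asymptote $k_n^{\ast}$.
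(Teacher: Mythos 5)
Your reduction to the transcendental equation $\tan(kT)=k$ and the confinement of $k_n$ to the interval $I_{n-1}=\left(\frac{(2n-3)\pi}{2T},\frac{(2n-1)\pi}{2T}\right)$ between consecutive poles of $\tan(\,\cdot\,T)$ reproduce the paper's argument, and the lower bound $\lambda_n>\left(\frac{(2n-3)\pi}{2T}\right)^2$ follows exactly as you say. Your suspicion that the upper bound is ``the main obstacle'' is, however, the crux of the matter: carrying out the test you propose at the natural interior point $x=(n-1)\pi/T$, which is precisely the zero of $\tan(\,\cdot\,T)$ inside $I_{n-1}$, one finds
\[
g\!\left(\tfrac{(n-1)\pi}{T}\right)=\tan\big((n-1)\pi\big)-\tfrac{(n-1)\pi}{T}=-\tfrac{(n-1)\pi}{T}<0\,.
\]
Since $g(x)=\tan(xT)-x$ is strictly increasing on $I_{n-1}$ (you already noted $g'(x)=T\sec^2(xT)-1\geq T-1>0$), the unique zero $k_n$ of $g$ in $I_{n-1}$ lies strictly to the \emph{right} of $(n-1)\pi/T$, not to the left. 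The correct two-sided estimate is therefore
\[
\left(\frac{(n-1)\pi}{T}\right)^2<\lambda_n(\mathcal H_0^{\{T\}})<\left(\frac{(2n-1)\pi}{2T}\right)^2\,,
\]
so the upper bound in the statement of the lemma is in fact a strict lower bound, and no choice of test point will rescue it.

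You should be aware that the paper's own proof contains the same misplacement: after correctly deriving $\tan(wT)=w$, it asserts without justification that the smallest positive zero of $g$ lies in $(\pi/(2T),\pi/T)$, whereas $g<0$ on all of $(\pi/(2T),\pi/T]$ and the zero actually lies in $(\pi/T,3\pi/(2T))$. A quick numerical check ($T=2$: the first positive solution of $\tan(2w)=w$ is $w\approx 2.13$, while $\pi/T=\pi/2\approx 1.57$) confirms this. The discrepancy is harmless for the rest of the paper, since the only use of Lemma~\ref{lem:1DL*} (in the proof of Proposition~\ref{prop.ub}) is the order of magnitude $\lambda_2(\mathcal H_0^{\{T\}})\sim T^{-2}$, which both versions of the bound give.
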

\begin{proof}
Let $w\geq0$ and $\lambda={\Bl -}w^2$ be a non-negative  eigenvalue of the
operator $\mathcal H_{0}^{\{T\}}$ with an eigenfunction $u$. We have,
\begin{equation} \label{ab1}
-u''=\lambda u\quad{\rm in~}(0,T)\,,\quad
u'(0)=-u(0)\,,\quad u(T)=0\,.
\end{equation}
 If $w=0$ and $T>1$, then $u=0$ is the unique
 solution of \eqref{ab1}.  Thus, $w>0$ and
\begin{equation} \label{ab2}
u(\tau)=A\cos(w\tau)+B\sin(w\tau)\,,
\end{equation}
for some constants $A\in\R$ and $B\in\R$ that depend on $T$. The boundary conditions satisfied by $u$ yield that $A=-Bw$, $\cos(wT)\not=0$ and
\[\tan(wT)=w\,.\]
Thus $w$ is a fixed point of the $\pi/T$-periodic function $x\mapsto\tan(xT)$. Obviously, there exist infinitely many solutions, at least one solution in every interval $(-\frac{\pi}{2T},\frac{\pi}{2T})+\frac{k\pi}{T}$, $k=0,\pm1,\cdots$. Since we are interested in the  positive solutions, we specialize first into the interval $(-\frac{\pi}{2T},\frac{\pi}{2T})$. Define the function $g(x)=\tan(xT)-x$. Clearly, $x=0$ is a zero of this function in the interval $(-\frac{\pi}{2T},\frac{\pi}{2T})$. It is the unique zero of $g$ in this interval since $g'(x)=T(1+\tan^2(xT))-1>0$ for $T>1$. Thus, the smallest $w>0$ that satisfies $g(w)=0$ does live in the interval $(\frac{\pi}{2T},\frac{\pi}{T})$, which is $\sqrt{\lambda_2(\mathcal H_{0}^{\{T\}})}$. The next positive zero of $g$, $\sqrt{\lambda_3(\mathcal H_{0}^{\{T\}})}$,  lives in the interval $(\frac{\pi}{2T},\frac{\pi}{T})+\frac{\pi}T$, etc.
\end{proof}

\subsection{In a weighted space}
Let $B\in\R$, $T >0$ such that $|B|T < \frac 13$. Consider the self-adjoint
operator, acting on $L^2\big((0,T);(1-B\tau)\dx\tau\big)$ and defined by
\begin{equation}\label{eq:H0b}
\mathcal H^{\{T\}}_{B}=-(1-B\tau)^{-1}\partial_{\tau}(1-B\tau)\partial_{\tau}=-\partial^2_{\tau}+B(1-B\tau)^{-1}\partial_{\tau}\,,
\end{equation}
with domain
\begin{equation}\label{eq:domH0b}
\Dom(\mathcal H^{\{T\}}_{B})=\{u\in H^2(0,T)~:~u'(0)=-u(0)\quad{\rm and}\quad u(T)=0\}\,.
\end{equation}
The operator $\mathcal H^{\{T\}}_{B}$ is the Friedrichs extension in $L^2\big((0,T);(1-B\tau)\dx\tau\big)$ associated
with the quadratic form defined for $u\in V^{\{T\}}_h$, by
\[q^{\{T\}}_{B}(u)=\int_0^{T}|u'(\tau)|^2(1-B\tau)\dx\tau-|u(0)|^2\,.\]
The operator $\mathcal H^{\{T\}}_{B}$ is with compact resolvent. The strictly increasing
sequence of the eigenvalues of $\mathcal H^{\{T\}}_{B}$ is denoted by
$(\lambda_n(\mathcal H^{\{T\}}_{B})_{n\in \mathbb N^*}$.
It is easy to compare the spectra of $\mathcal H^{\{T\}}_{B}$ and  $\mathcal H^{\{T\}}_{0}$ as $B$ goes to $0$.
\begin{lemma}\label{lem:H0b}
There exist $T_0, C >0$ such that for all $T \geq T_0$, $B\in \left(-1/(3T), 1/(3T)\right)$ and $n\in\mathbb N^*$, there holds,
\[\left|\lambda_n(\mathcal H^{\{T\}}_{B})-\lambda_n(\mathcal H^{\{T\}}_{0})\right|\leq C|B|T\Big(\,\big|\lambda_n(\mathcal H^{\{T\}}_{0})\big|+1\Big)\,.\]
\end{lemma}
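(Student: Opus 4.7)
My plan is to compare the two Rayleigh quotients on the common form domain $V_0^{\{T\}}$ and then conclude via the min-max principle. Let $R_B(u)=q^{\{T\}}_B(u)/\|u\|_B^2$ and $R_0(u)=q^{\{T\}}_0(u)/\|u\|_0^2$, where $\|u\|_B^2=\int_0^T |u|^2(1-B\tau)\dx\tau$ and $\|u\|_0^2=\int_0^T |u|^2\dx\tau$. Since $|B|T<1/3$, one has $(1-B\tau)\in (2/3,4/3)$, so the two weights are comparable. A direct algebraic manipulation gives
\[
R_B(u)-R_0(u)=-B\,\frac{\int_0^T \tau |u'|^2\dx\tau}{\|u\|_B^2}+R_0(u)\cdot\frac{B\int_0^T\tau|u|^2\dx\tau}{\|u\|_B^2}\,,
\]
and, using $\|u\|_B^2\geq(2/3)\|u\|_0^2$ together with $\int_0^T\tau|\cdot|^2\leq T\int_0^T|\cdot|^2$, I obtain
\[
|R_B(u)-R_0(u)|\leq \tfrac{3}{2}|B|T\,\frac{\int_0^T|u'|^2\dx\tau}{\|u\|_0^2}+\tfrac{3}{2}|B|T\,|R_0(u)|\,.
\]

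The next step is to absorb the Dirichlet energy $\int_0^T|u'|^2$ into $R_0(u)$. Using $u(T)=0$ and the fundamental theorem of calculus, $|u(0)|^2=-2\,\mathrm{Re}\int_0^T u'\bar u\dx\tau$, the Cauchy-Schwarz and Young inequalities yield $|u(0)|^2\leq \tfrac{1}{2}\int_0^T|u'|^2+2\int_0^T|u|^2$. Inserting this into $\int_0^T|u'|^2=q^{\{T\}}_0(u)+|u(0)|^2$ gives $\int_0^T|u'|^2\leq 2q_0^{\{T\}}(u)+4\|u\|_0^2$, hence
\[
\frac{\int_0^T|u'|^2\dx\tau}{\|u\|_0^2}\leq 2|R_0(u)|+4\,.
\]
Combining everything produces the pointwise comparison
\[
|R_B(u)-R_0(u)|\leq C\,|B|T\,\bigl(|R_0(u)|+1\bigr)\qquad\text{for all }u\in V_0^{\{T\}}\setminus\{0\}\,.
\]

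Finally, I apply the min-max principle. For the upper bound, pick an $n$-dimensional subspace $F_n\subset V_0^{\{T\}}$ realizing $\lambda_n(\mathcal H^{\{T\}}_{0})=\max_{u\in F_n\setminus 0}R_0(u)$. On $F_n$ the values $R_0(u)$ lie between $\lambda_1(\mathcal H^{\{T\}}_0)$ and $\lambda_n(\mathcal H^{\{T\}}_0)$; by Lemma \ref{lem:1DL} one has $\lambda_1(\mathcal H^{\{T\}}_{0})\geq -2$ for $T\geq T_0$, so $|R_0(u)|\leq |\lambda_n(\mathcal H^{\{T\}}_0)|+2$ on $F_n$. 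Then
\[
\lambda_n(\mathcal H^{\{T\}}_B)\leq\max_{u\in F_n\setminus 0}R_B(u)\leq\lambda_n(\mathcal H^{\{T\}}_0)+C'|B|T\bigl(|\lambda_n(\mathcal H^{\{T\}}_0)|+1\bigr)\,.
\]
Exchanging the roles of $B$ and $0$ (the pointwise estimate is symmetric up to replacing $R_0$ by $R_B$, and $R_B(u)\geq(2/3)R_0(u)-O(|B|T)$ keeps the Rayleigh quotients bounded from below so that $|R_B(u)|\leq |\lambda_n(\mathcal H^{\{T\}}_B)|+O(1)$ on the optimal subspace for $\mathcal H^{\{T\}}_B$) yields the matching lower bound, completing the proof.

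The only mildly delicate point is keeping the constant in front uniform in $n$: it forces one to estimate $\int_0^T|u'|^2$ in terms of the Rayleigh quotient rather than just of $\|u\|^2$, which is exactly what the trace inequality above provides. Everything else is bookkeeping on the quadratic forms using $|B|T<1/3$.
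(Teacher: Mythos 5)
The paper does not prove Lemma~\ref{lem:H0b}: it is stated in the appendix as a recollection, with a blanket reference to \cite{HK-tams} and \cite{HKR15}. Your proof is therefore a self-contained variational argument filling in something the paper leaves implicit, and the core of it is correct. The algebraic identity for $R_B-R_0$ checks out, the trace estimate $\int_0^T|u'|^2\leq 2q_0^{\{T\}}(u)+4\|u\|_0^2$ (via $|u(0)|^2=-2\Re\int_0^T u'\bar u\,\dx\tau$ and Young) is exactly what is needed to make the constant independent of $n$, and the min--max step for the upper bound is clean. One remark on your side observation: the same trace inequality already gives $R_0(u)\geq-2$ for every $u$, so invoking Lemma~\ref{lem:1DL} to bound $\lambda_1$ from below is optional.

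The one place where your write-up is genuinely terse is the lower bound. After exchanging roles you arrive at an estimate of the form $\lambda_n(\mathcal H^{\{T\}}_0)-\lambda_n(\mathcal H^{\{T\}}_B)\leq C|B|T\big(|\lambda_n(\mathcal H^{\{T\}}_B)|+1\big)$, i.e.\ with $|\lambda_n(\mathcal H^{\{T\}}_B)|$ on the right, whereas the lemma demands $|\lambda_n(\mathcal H^{\{T\}}_0)|$. You need one more line to close this: from the already-proved upper bound $\lambda_n(\mathcal H^{\{T\}}_B)\leq\lambda_n(\mathcal H^{\{T\}}_0)+C|B|T(|\lambda_n(\mathcal H^{\{T\}}_0)|+1)$, combined with the uniform lower bound $\lambda_n(\mathcal H^{\{T\}}_B)\geq\lambda_1(\mathcal H^{\{T\}}_B)\geq-9/4$ (which follows from your trace estimate applied to $q^{\{T\}}_B$), one gets $|\lambda_n(\mathcal H^{\{T\}}_B)|\leq C'\big(|\lambda_n(\mathcal H^{\{T\}}_0)|+1\big)$ in both the nonnegative and negative cases, and substituting this back gives the stated bound. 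Note that the naive absorption $|R_0|\lesssim|R_B|$ directly from the pointwise estimate does \emph{not} work here, because the constant $C$ you produce (roughly $6$) times $|B|T<1/3$ is not less than $1$; the detour through the already-established upper bound on eigenvalues is what avoids that obstruction. With this elaboration the proof is complete.
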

Then we notice that, for all $T>0$, the family $\left(\mathcal{H}^{\{T\}}_{B}\right)_{B}$ is analytic for $B$ small enough. More precisely, we have

\begin{lemma}\label{lem.analyticB}
There exist $T_0 >0$ such that for all $T \geq T_0$, the two functions $(-1/(3T), 1/(3T)) \ni B\mapsto \lambda_{1}\left(\mathcal{H}^{\{T\}}_{B}\right)$ and  $(-1/(3T), 1/(3T)) \mapsto u_{B}^{\{T\}}$ are analytic. Here $u_{B}^{\{T\}}$   is the corresponding positive and normalized eigenfunction $ \lambda_{1}\left(\mathcal{H}^{\{T\}}_{B}\right)$.
\end{lemma}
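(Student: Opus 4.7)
The plan is to view $\mathcal{H}_B^{\{T\}}$ as a self-adjoint holomorphic family in Kato's sense and then invoke the standard perturbation theory of isolated simple eigenvalues. The only subtlety is that the underlying Hilbert space $L^2((0,T),(1-B\tau)\dx\tau)$ itself depends on $B$, so the first step is to conjugate everything into the fixed space $L^2((0,T),\dx\tau)$, after which the family will be of type (B).

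For $|B|T<1/3$, define the unitary operator $U_B \colon L^2((0,T),\dx\tau) \to L^2((0,T),(1-B\tau)\dx\tau)$ by $U_Bu(\tau)=(1-B\tau)^{-1/2} u(\tau)$; it depends analytically on $B$. Set $\widetilde{\mathcal H}_B=U_B^{-1}\mathcal H_B^{\{T\}}U_B$. A direct computation, using integration by parts on the cross term produced by the rescaling and the Dirichlet condition $v(T)=0$, shows that the associated quadratic form, defined on the $B$-independent form domain $V_0^{\{T\}}$, is
\[
\tilde q_B(v)=\int_0^T |v'(\tau)|^2\dx\tau-\frac{B^2}{4}\int_0^T\frac{|v(\tau)|^2}{(1-B\tau)^2}\dx\tau-\Big(1+\frac{B}{2}\Big)|v(0)|^2\,.
\]
The coefficients are analytic in $B$ for $|B|T<1/3$, the form domain is fixed, and one easily checks that $\tilde q_B$ is uniformly sectorial on $V_0^{\{T\}}$ with respect to $B$ in this range. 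Hence $B\mapsto \tilde q_B$ is a self-adjoint holomorphic family of quadratic forms of type (B) in Kato's sense, and the associated operators $\widetilde{\mathcal H}_B$ form a self-adjoint analytic family.

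Next, Lemma \ref{lem:1DL} yields $\lambda_1(\mathcal H_0^{\{T\}})=-1+o(1)$ and $\lambda_2(\mathcal H_0^{\{T\}})\geq 0$ as $T\to\infty$, while Lemma \ref{lem:H0b} implies that, for $T\geq T_0$ large enough and $|B|<1/(3T)$, $\lambda_1(\mathcal H_B^{\{T\}})$ stays within $o(1)$ of $-1$ and is separated from the rest of the spectrum by a uniform gap (of size at least $\tfrac{1}{2}$, say). Therefore $\lambda_1$ is a simple, isolated eigenvalue of $\widetilde{\mathcal H}_B$ for all $B$ in a fixed complex neighborhood of the origin. Kato's theorem on analytic perturbation of isolated simple eigenvalues then gives analyticity of $B\mapsto\lambda_1(\widetilde{\mathcal H}_B)=\lambda_1(\mathcal H_B^{\{T\}})$ and of the associated rank-one spectral projection $P_B$. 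Fixing a reference vector $w_0$ (e.g.\ the ground state at $B=0$) with $P_Bw_0\neq 0$ for small $B$, the family $\tilde v_B=P_Bw_0/\|P_Bw_0\|$ is an analytic family of normalized eigenfunctions of $\widetilde{\mathcal H}_B$, and $u_B^{\{T\}}=U_B\tilde v_B$ is then the sought analytic family. Positivity is propagated from the strictly positive ground state at $B=0$ by continuity and can be fixed unambiguously by requiring $u_B^{\{T\}}(0)>0$. The only real bookkeeping obstacle is the correct derivation of $\tilde q_B$ and the verification that the sectoriality constants can be chosen uniform in $B$ for $|B|T<1/3$; once that is done, the conclusion is a standard appeal to Kato's perturbation theory.
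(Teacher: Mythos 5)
The paper states this lemma without proof, deferring to \cite{HK-tams} and \cite{HKR15} for the one-dimensional material, so there is no in-paper argument to compare against. Your proof is correct and is the standard one: conjugate by the unitary $U_B$ to the fixed Hilbert space $L^2((0,T),\dx\tau)$, compute the conjugated form $\tilde q_B$ (your expression is right once the cross term produced by differentiating $(1-B\tau)^{-1/2}$ is integrated by parts, with $v(T)=0$ killing the boundary contribution at $T$), observe that it is a holomorphic family of type (B) on the $B$-independent form domain $V_0^{\{T\}}$, apply Kato's perturbation theory to the simple isolated first eigenvalue, and conjugate back by $U_B$.

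One small inaccuracy is worth flagging. Lemma \ref{lem:H0b} does not deliver the quantitative statement you read off from it: under the sole constraint $|B|T<\tfrac13$, its bound $C|B|T\bigl(|\lambda_n(\mathcal H_0^{\{T\}})|+1\bigr)$ is only $O(1)$, a fixed multiple of the unspecified constant $C$, not $o(1)$, so the uniform gap of size at least $\tfrac12$ you assert is not what that lemma yields. This does not affect your conclusion, because Kato's theorem needs only that $\lambda_1(\mathcal H_B^{\{T\}})$ be a simple, isolated eigenvalue at each fixed real $B$ in the interval, which is automatic for a one-dimensional Sturm--Liouville operator with compact resolvent; local analyticity at each point then extends over the whole interval without any uniform-gap hypothesis.
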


The next proposition states a two-term asymptotic expansion of the eigenvalue $\lambda_1(\mathcal H^{\{T\}}_{B})$.
\begin{proposition}\label{lem:H0b;l} There exist $T_0 >0$ and $C>0$  such that for all $T \geq T_0$, for all $B\in \left(-1/(3T), 1/(3T)\right)$ there holds,
\[\Big|\lambda_1(\mathcal H^{\{T\}}_{B})-(-1-B)\Big|\leq CB^2\,.\]
\end{proposition}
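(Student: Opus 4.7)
My strategy is a second-order Taylor expansion of $B \mapsto \lambda_1(B) := \lambda_1(\mathcal H^{\{T\}}_B)$ at $B=0$, with the linear coefficient identified by a Feynman--Hellmann computation. By Lemma \ref{lem.analyticB}, $\lambda_1$ is analytic on $(-1/(3T), 1/(3T))$, so Taylor's formula with integral remainder reads
\[
\lambda_1(B) = \lambda_1(0) + B\,\lambda_1'(0) + B^2 \int_0^1 (1-s)\,\lambda_1''(sB)\,ds.
\]
By Lemma \ref{lem:1DL} we have $\lambda_1(0) = -1 + O(e^{-2T})$, which is absorbed into the error for $T \geq T_0$ large enough.

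The computation of $\lambda_1'(0)$ is the heart of the argument. Starting from $\lambda_1(B) = q^{\{T\}}_B(u^{\{T\}}_B)$ with $u^{\{T\}}_B$ the real, positive, normalized eigenfunction in $L^2((0,T),(1-B\tau)\dx\tau)$ provided by Lemma \ref{lem.analyticB}, I differentiate in $B$. The envelope theorem (the $\partial_B u^{\{T\}}_B$-contributions combine into $2\langle (\mathcal H^{\{T\}}_B - \lambda_1(B))u^{\{T\}}_B, \partial_B u^{\{T\}}_B\rangle = 0$) leaves only the explicit $B$-dependence of the form and the weight:
\[
\lambda_1'(B) = -\int_0^T \tau\,|(u^{\{T\}}_B)'|^2\,\dx\tau + \lambda_1(B)\int_0^T \tau\,|u^{\{T\}}_B|^2\,\dx\tau.
\]
At $B = 0$, I invoke the eigenvalue equation $-(u^{\{T\}}_0)'' = \lambda_1(0)\,u^{\{T\}}_0$ to integrate by parts: writing $\int_0^T \tau\,|(u^{\{T\}}_0)'|^2\,\dx\tau = -\int_0^T u^{\{T\}}_0\,[(u^{\{T\}}_0)' + \tau(u^{\{T\}}_0)'']\,\dx\tau$ (the boundary term vanishes by $\tau|_{\tau=0}=0$ and $u^{\{T\}}_0(T)=0$) gives $\int_0^T \tau\,|(u^{\{T\}}_0)'|^2\,\dx\tau = \tfrac12 |u^{\{T\}}_0(0)|^2 + \lambda_1(0)\int_0^T \tau\,|u^{\{T\}}_0|^2\,\dx\tau$. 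This yields the clean identity $\lambda_1'(0) = -\tfrac12 |u^{\{T\}}_0(0)|^2$. Since $u^{\{T\}}_0 \to \sqrt{2}\,e^{-\tau}$ as $T\to\infty$ (the groundstate of $\mathcal H_0$ from the half-line discussion), one has $|u^{\{T\}}_0(0)|^2 = 2 + O(e^{-2T})$, hence $\lambda_1'(0) = -1 + O(e^{-2T})$.

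The main obstacle I anticipate is the uniform-in-$T$ bound $|\lambda_1''(B)| \leq C$ on the real interval $(-1/(3T), 1/(3T))$. I would handle it by promoting the analyticity of Lemma \ref{lem.analyticB} to a complex disk of radius independent of $T$. The quadratic form $q^{\{T\}}_B$ extends to an analytic family of type (B) in Kato's sense for $B$ in a complex neighborhood of $0$, and the spectral gap $\lambda_2(\mathcal H^{\{T\}}_B) - \lambda_1(\mathcal H^{\{T\}}_B) \geq 1/2$ guaranteed by Lemmas \ref{lem:1DL}--\ref{lem:1DL*} and \ref{lem:H0b} ensures the ground eigenvalue remains isolated in a complex disk of $T$-independent radius $r_0 > 0$; on this disk $\lambda_1$ is bounded by a constant, and Cauchy's integral formula gives $|\lambda_1''(B)| \leq C$ on the smaller real interval. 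Combining everything in the Taylor expansion concludes $|\lambda_1(B) - (-1-B)| \leq CB^2$ (the negligible $O(e^{-2T})$ contributions from $\lambda_1(0)+1$ and $\lambda_1'(0)+1$ being absorbed into the constant by taking $T_0$ large). A purely variational alternative, testing $\mathcal H^{\{T\}}_B$ against $u^{\{T\}}_0$ for the upper bound and $\mathcal H^{\{T\}}_0$ against $u^{\{T\}}_B$ for the lower bound, would avoid the complex analysis but require tracking the deformation $|u^{\{T\}}_B(0)|^2 - |u^{\{T\}}_0(0)|^2 = O(B)$ by hand.
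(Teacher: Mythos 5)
The paper does not give its own proof of this proposition; the preamble of the appendix states that these one-dimensional facts are taken from \cite{HK-tams} and \cite{HKR15}. Your blind proof is therefore compared against what a self-contained proof would need, and on that basis there is a real gap in your primary argument, though your fallback alternative is sound.

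Your Feynman--Hellmann computation is correct: the envelope argument gives
\[
\lambda_1'(B) = -\int_0^T \tau\,|(u^{\{T\}}_B)'|^2\,\dx\tau + \lambda_1(B)\int_0^T \tau\,|u^{\{T\}}_B|^2\,\dx\tau\,,
\]
and the integration by parts at $B=0$, using $-(u^{\{T\}}_0)''=\lambda_1(0)u^{\{T\}}_0$, indeed collapses to $\lambda_1'(0)=-\tfrac12|u_0^{\{T\}}(0)|^2 = -1+\mathcal O(e^{-2T})$. The problem is your proposed uniform bound on $\lambda_1''$. The natural domain of sectoriality of the form $q_B^{\{T\}}$, and hence the disk on which Lemma~\ref{lem.analyticB} can be expected to give analyticity, is $|B|<1/T$: the weight $1-B\tau$ must stay away from zero on $[0,T]$. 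This disk \emph{shrinks} with $T$, so your claim of a complex disk of $T$-independent radius $r_0>0$ is not available. If you apply Cauchy's estimate on a circle of radius comparable to $1/T$ you pick up a factor of order $T^2$ in the bound on $\lambda_1''$, and the argument collapses.

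The variational alternative you sketch at the end is the correct route and should be promoted to the main argument. Testing $q_B^{\{T\}}$ against $u_0^{\{T\}}$ and using your integration-by-parts identity gives the upper bound $\lambda_1(B)\le \lambda_1(0)-\tfrac{B}{2}|u_0^{\{T\}}(0)|^2(1+\mathcal O(B))$, the $\mathcal O(B)$ coming from $\|u_0^{\{T\}}\|_B^{-2}=1+B\int\tau|u_0^{\{T\}}|^2\dx\tau+\mathcal O(B^2)$, with all moments controlled by Proposition~\ref{prop.AgmonuBT}. For the lower bound, expand $q_B^{\{T\}}(u_B^{\{T\}})=q_0^{\{T\}}(u_B^{\{T\}})-B\int\tau|(u_B^{\{T\}})'|^2\dx\tau$, use $q_0^{\{T\}}(u_B^{\{T\}})\ge\lambda_1(0)\|u_B^{\{T\}}\|_{L^2}^2$ and $\|u_B^{\{T\}}\|_{L^2}^2=1+B\int\tau|u_B^{\{T\}}|^2\dx\tau$, and then replace the $u_B^{\{T\}}$-moments by the $u_0^{\{T\}}$-moments up to $\mathcal O(B)$ via Lemma~\ref{lem.h12}; Proposition~\ref{prop.AgmonuBT} again makes all constants uniform in $T$. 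This avoids any complex-analytic continuation and produces a genuinely $T$-independent constant.

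One last remark on the statement itself: because $\lambda_1(\mathcal H_0^{\{T\}})=-1+4(1+o(1))e^{-2T}\ne-1$ exactly, the bound $|\lambda_1(B)-(-1-B)|\le CB^2$ cannot hold verbatim at $B=0$ for any fixed $T$. The honest conclusion of the above is $|\lambda_1(B)-(-1-B)|\le C(B^2+e^{-2T})$, which is what is used downstream (there $T=\hbar^{-1}$ and $B=\mathcal O(\hbar^2)$, so $e^{-2T}$ is negligible). You should not try to ``absorb'' the $e^{-2T}$ into $CB^2$ by taking $T_0$ large: for fixed $T\ge T_0$ one can always choose $B$ small enough to violate such an absorption.
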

We have also a decay estimate of $u_{B}^{\{T\}}$ that is a classical consequence of Proposition \ref{lem:H0b;l}, the fact that the Dirichlet problem on $(0,T)$ is positive  and of  Agmon estimates.

\begin{proposition}\label{prop.AgmonuBT}There exist $T_0 >0$, $\alpha >0$ and $C>0$  such that for all $T \geq T_0$, for all $B\in \left(-1/(3T), 1/(3T)\right)$  there holds,
\[\|e^{\alpha\tau}u_{B}^{\{T\}}\|_{H^1\big((0,T);(1-B\tau)\dx\tau\big)}\leq C\,.\]
\end{proposition}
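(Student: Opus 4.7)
The plan is to prove this by a standard one-dimensional Agmon estimate. The mechanism driving the exponential decay is already supplied by the preceding results: Proposition~\ref{lem:H0b;l} gives $\lambda_1(\mathcal{H}_B^{\{T\}}) = -1 - B + \mathcal{O}(B^2)$, so $\lambda_1$ is strictly bounded away from $0$ uniformly in $T\geq T_0$ and $|B|T<1/3$; meanwhile the Dirichlet realization on $(0,T)$ of $-(1-B\tau)^{-1}\partial_\tau((1-B\tau)\partial_\tau\cdot)$ is non-negative. The gap between the strictly negative $\lambda_1$ and the non-negative Dirichlet spectrum forces $u_B^{\{T\}}$ to concentrate at $\tau=0$, and the size of this gap dictates the admissible decay rate $\alpha$.

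To make this quantitative, I would test the eigenvalue equation $\mathcal{H}_B^{\{T\}} u_B^{\{T\}} = \lambda_1(\mathcal{H}_B^{\{T\}}) u_B^{\{T\}}$ against $e^{2\alpha\tau} u_B^{\{T\}}$ in $L^2((0,T),(1-B\tau)\dx\tau)$, for a small $\alpha>0$ to be chosen. A first integration by parts produces two boundary terms: the one at $\tau=T$ vanishes by the Dirichlet condition, while the one at $\tau=0$ becomes $-|u_B^{\{T\}}(0)|^2$ by virtue of the Robin condition $u'(0)=-u(0)$. A second integration by parts on the cross-term $2\alpha \int e^{2\alpha\tau} u_B u_B'(1-B\tau)\dx\tau$, using $(u_B^2)'=2u_B u_B'$, collects everything into the Agmon-type identity
\begin{equation*}
\int_0^T e^{2\alpha\tau}|u_B'|^2 (1-B\tau)\dx\tau + (-\lambda_1 - 2\alpha^2)\int_0^T e^{2\alpha\tau} u_B^2(1-B\tau)\dx\tau + \alpha B \int_0^T e^{2\alpha\tau} u_B^2\dx\tau = (1+\alpha)|u_B(0)|^2.
\end{equation*}

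Choosing $\alpha$ small enough (say $\alpha^2 < 1/8$) and using Proposition~\ref{lem:H0b;l} to get $-\lambda_1\geq 1/2$ makes the coefficient in front of the second integral uniformly positive; since $1-B\tau \in [2/3,4/3]$ under $|B|T<1/3$, the $\alpha B$ correction is absorbed into it. The remaining task is to bound $|u_B^{\{T\}}(0)|^2$ uniformly. Combining the standard trace interpolation $|v(0)|^2 \leq \eps \|v'\|^2 + C_\eps \|v\|^2$ with the quadratic form identity $\int |u_B'|^2(1-B\tau)\dx\tau = |u_B(0)|^2 + \lambda_1$ and the normalization $\|u_B\|_{L^2((1-B\tau)\dx\tau)}=1$ yields, after taking $\eps$ small, a uniform upper bound on $|u_B(0)|^2$. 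The sought $H^1$-bound for $e^{\alpha\tau}u_B^{\{T\}}$ then follows from the product rule $(e^{\alpha\tau} u_B)' = \alpha e^{\alpha\tau} u_B + e^{\alpha\tau} u_B'$. The main obstacle is purely one of bookkeeping: the weight $(1-B\tau)$ produces several lower-order corrections of size $\mathcal{O}(|B|)=\mathcal{O}(1/T)$ through the successive integrations by parts, and one must choose $\alpha$ small enough uniformly in $B$ and $T$ so that all of these can be absorbed into the positive leading terms without destroying the estimate.
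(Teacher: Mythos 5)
Your argument is correct and is precisely the classical one-dimensional Agmon estimate to which the paper appeals without giving details (it states only that the result is a ``classical consequence of Proposition~\ref{lem:H0b;l}, the fact that the Dirichlet problem on $(0,T)$ is positive and of Agmon estimates''). The integration-by-parts identity you derive, the use of $-\lambda_1\geq 1/2$ from Proposition~\ref{lem:H0b;l}, and the uniform control of $|u_B^{\{T\}}(0)|^2$ via the trace inequality and the normalization together constitute a faithful implementation of that outline.
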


\begin{lemma}\label{lem.h12}
There exist $C>0$  and $T_0 >0$ such that for all $T\geq T_0$ and all $B\in(-1/(3T), 1/(3T))$,
\begin{align}
\left|\partial_{B}\lambda_{1}\left(\mathcal{H}_{B}^{\{T\}}\right)\right|&\leq C\label{FHb}\,,\\
\|\partial_{B} \tilde u_{B}^{\{T\}}\|_{L^2((0,T),\dx\tau)}&\leq C\label{FHc}\,.
\end{align}
where $\tilde u_{B}^{\{T\}}=(1-B\tau)^{\frac{1}{2}}u_{B}^{\{T\}}$.
\end{lemma}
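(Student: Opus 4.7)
The plan is to pass to a $B$-independent Hilbert space via the unitary transformation $U_B : L^2((0,T);(1-B\tau)\dx\tau) \to L^2((0,T);\dx\tau)$ defined by $U_B u = (1-B\tau)^{1/2} u$, so that $\tilde u_{B}^{\{T\}} = U_B u_B^{\{T\}}$ is the ground state of the transformed operator $\tilde{\mathcal{H}}_B^{\{T\}} := U_B \mathcal{H}_B^{\{T\}} U_B^{-1}$ and is normalized in a fixed inner product. A direct computation of the conjugated form (integrating by parts the cross term arising from differentiating $(1-B\tau)^{1/2}$) gives
\[
\tilde q_B^{\{T\}}(\tilde u) = \int_0^T |\tilde u'|^2\dx\tau - \int_0^T \frac{B^2}{4(1-B\tau)^2}|\tilde u|^2\dx\tau - \Big(1+\frac{B}{2}\Big)|\tilde u(0)|^2,
\]
and by Lemma \ref{lem.analyticB} the family $B\mapsto \tilde{\mathcal{H}}_B^{\{T\}}$, $\tilde u_{B}^{\{T\}}$, $\lambda_1(\mathcal H_B^{\{T\}})$ is analytic on $(-1/(3T),1/(3T))$.

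For \eqref{FHb}, I apply Feynman--Hellmann in the fixed space $L^2(\dx\tau)$, where $\|\tilde u_B^{\{T\}}\|=1$. Since $\partial_B \lambda_1(\mathcal H_B^{\{T\}}) = (\partial_B \tilde q_B^{\{T\}})(\tilde u_B^{\{T\}},\tilde u_B^{\{T\}})$, differentiating the explicit form above yields
\[
\partial_B \lambda_1 = -\int_0^T \frac{B}{2(1-B\tau)^3}|\tilde u_B^{\{T\}}|^2\dx\tau - \frac{1}{2}|\tilde u_B^{\{T\}}(0)|^2.
\]
Under $|B|T\le 1/3$ we have $(1-B\tau)^{-3}\le (3/2)^3$ uniformly; together with $\|\tilde u_B^{\{T\}}\|_{L^2}=1$ and the pointwise bound $|\tilde u_B^{\{T\}}(0)|\le C$ derived from the $H^1$-Agmon estimate of Proposition \ref{prop.AgmonuBT} (via a standard trace inequality), this gives $|\partial_B \lambda_1|\le C$ uniformly in $T\ge T_0$.

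For \eqref{FHc}, I would differentiate the eigenequation $\tilde{\mathcal{H}}_B^{\{T\}} \tilde u_B^{\{T\}} = \lambda_1(B)\,\tilde u_B^{\{T\}}$ and use the spectral gap. Choosing the phase so that $\tilde u_B^{\{T\}}>0$, the normalization $\|\tilde u_B^{\{T\}}\|^2=1$ forces $\partial_B \tilde u_B^{\{T\}} \perp \tilde u_B^{\{T\}}$, so
\[
(\tilde{\mathcal{H}}_B^{\{T\}} - \lambda_1)\,\partial_B \tilde u_B^{\{T\}} = (\partial_B \lambda_1)\tilde u_B^{\{T\}} - (\partial_B \tilde{\mathcal{H}}_B^{\{T\}})\tilde u_B^{\{T\}}\quad\text{on }(\tilde u_B^{\{T\}})^\perp.
\]
By Proposition \ref{lem:H0b;l} one has $\lambda_1(\mathcal H_B^{\{T\}})\approx -1-B$, while Lemmas \ref{lem:1DL} and \ref{lem:H0b} give $\lambda_2(\mathcal H_B^{\{T\}})\ge -C|B|T \ge -1/9$ for $T\ge T_0$, so the spectral gap $\lambda_2-\lambda_1$ is bounded below by a fixed $\gamma>0$ uniformly in $(T,B)$. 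Inverting on $(\tilde u_B^{\{T\}})^\perp$ with norm $\le 1/\gamma$, the bound on $\partial_B \lambda_1$ from the previous step plus an estimate of $\|(\partial_B\tilde{\mathcal H}_B^{\{T\}})\tilde u_B^{\{T\}}\|$ (interpreted in the form sense, whose bulk contribution is controlled by $(1-B\tau)^{-3}|\tilde u_B^{\{T\}}|$ and whose boundary contribution is controlled by $|\tilde u_B^{\{T\}}(0)|$) yield \eqref{FHc}.

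The main obstacle is that the Robin coefficient $1+B/2$ in $\tilde{\mathcal{H}}_B^{\{T\}}$ depends on $B$, so $\partial_B \tilde{\mathcal{H}}_B^{\{T\}}$ is not literally a multiplication operator; one must interpret the identity above as an equation of sesquilinear forms. Concretely, the boundary piece of $\partial_B \tilde q_B^{\{T\}}$, namely $\tilde v\mapsto -\tfrac12 \tilde u_B^{\{T\}}(0)\overline{\tilde v(0)}$, is a bounded antilinear functional on $H^1(0,T)$ by the trace inequality, hence defines a bounded element of $L^2$ after testing against the resolvent; the pointwise decay of $\tilde u_B^{\{T\}}$ near $\tau=0$ (Proposition \ref{prop.AgmonuBT}) keeps all constants uniform in $T$.
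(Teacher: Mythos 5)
The paper itself gives no proof of Lemma~\ref{lem.h12} (the appendix states that most of these one-dimensional facts come from \cite{HK-tams} and \cite{HKR15}), so there is no house argument to compare against; your proposal stands on its own. The transfer to the flat Hilbert space via $U_B u=(1-B\tau)^{1/2}u$ is the natural move, your computation of $\tilde q_B^{\{T\}}$ (including the integration by parts that produces the shifted Robin coefficient $1+B/2$) is correct, and the Feynman--Hellmann derivation of \eqref{FHb} is clean and complete: the form domain is $B$-independent, $\|\tilde u_B^{\{T\}}\|_{L^2(\dx\tau)}=1$, and $(1-B\tau)^{-3}\le(3/2)^3$ together with the trace control of $|\tilde u_B^{\{T\}}(0)|$ from Proposition~\ref{prop.AgmonuBT} closes the bound.

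For \eqref{FHc} the structure of your argument (differentiate the eigenequation, project onto $(\tilde u_B^{\{T\}})^\perp$, invert across a uniform spectral gap) is the right one, but two steps are asserted rather than proved. First, the chain ``$\lambda_2(\mathcal H_B^{\{T\}})\ge -C|B|T\ge -1/9$'' does not follow from Lemma~\ref{lem:H0b}: that lemma produces some fixed constant $C$, and with $|B|T$ allowed to approach $1/3$ you cannot conclude a numerical bound like $-1/9$ without knowing $C$ is small. The uniform gap is nonetheless true, but the clean way to see it is to exploit $|B|\le 1/(3T_0)$ directly: using the trace inequality $|\tilde u(0)|^2\le 2\|\tilde u\|\,\|\tilde u'\|$, one checks that $\tilde q_B^{\{T\}}$ is a relatively form-bounded perturbation of $\tilde q_0^{\{T\}}$ with relative bound $O(|B|)$, uniformly in $T$, and since $\lambda_1(\mathcal H_0^{\{T\}})\to -1$ while $\lambda_2(\mathcal H_0^{\{T\}})\ge 0$ (Lemma~\ref{lem:1DL}), a gap $\ge 1/2$, say, survives for $T_0$ large. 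Second, as you rightly note, $\partial_B\tilde{\mathcal H}_B^{\{T\}}$ is only a form perturbation because of the $B$-dependence of the Robin constant, so the right-hand side of your resolvent equation lives in $H^{-1}$, not $L^2$; to conclude you must show that $\tilde q_B^{\{T\}}-\lambda_1$ is $H^1$-coercive on $(\tilde u_B^{\{T\}})^\perp$ with constants independent of $T$ and $B$. This is indeed available (combine the $L^2$ lower bound by the gap with the elementary estimate $\tilde q_B(\tilde w)\ge \tfrac12\|\tilde w'\|^2-C\|\tilde w\|^2$ obtained from the trace inequality, then take a convex combination), and once you have $c\,\|\partial_B\tilde u_B^{\{T\}}\|_{H^1}^2\le -(\partial_B\tilde q_B^{\{T\}})(\tilde u_B^{\{T\}},\partial_B\tilde u_B^{\{T\}})\le C\|\partial_B\tilde u_B^{\{T\}}\|_{H^1}$ the $L^2$ bound follows. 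Your proof is therefore correct in structure but needs these two points written out; neither is a mere technicality since the whole content of the lemma is the uniformity in $T$.
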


\subsection*{Acknowledgments}
The authors would like to thank K. Pravda-Starov for stimulating discussions. This work was partially supported by the Henri Lebesgue Center (programme  \enquote{Investissements d'avenir}  -- n$^{\rm o}$ ANR-11-LABX-0020-01).

\end{document}